\newtheorem{theorem}{Theorem}[section]
\newtheorem{lemma}[theorem]{Lemma}
\newtheorem{corollary}[theorem]{Corollary}
\newtheorem{proposition}[theorem]{Proposition}
\theoremstyle{definition}
\newtheorem{example}[theorem]{Example}
\theoremstyle{remark}
\newtheorem{remark}[theorem]{Remark}
\newcommand{\N}{\mathbb{N}}
\newcommand{\Z}{\mathbb{Z}}
\newcommand{\Q}{\mathbb{Q}}
\newcommand{\R}{\mathbb{R}}
\newcommand{\C}{\mathbb{C}}
\newcommand{\F}{\mathbb{F}}
\renewcommand{\P}{\mathbb{P}}
\newcommand{\cG}{\mathcal{G}}
\newcommand{\cP}{\mathcal{P}}
\newcommand{\cM}{\mathcal{M}}
\newcommand{\pl}{\partial}
\newcommand{\AVG}{\frac{1}{N} \sum_{n=1}^{N}}
\newcommand{\BEu}[1]{\underset{#1}{\mathlarger{\mathlarger{\mathbb{E}}}^{~}}\,}
\newcommand{\BEul}[1]{\underset{#1}{\mathlarger{\mathlarger{\mathbb{E}}}^{\text{\normalfont\footnotesize log}}}\,}
\newcommand{\ve}{\varepsilon}
\newcommand{\on}{\operatorname}
\newcommand{\lcm}{\on{lcm}}
\renewcommand{\mod}[1]{\,(\on{mod}#1)}
\newcommand{\of}[1]{\left(#1\right)}
\newcommand{\set}[1]{\left\{#1\right\}}
\newcommand{\abs}[1]{\left\vert#1\right\vert}
\author[B. Wang]{Biao Wang}
\address{Hua Loo-Keng Center for Mathematical Sciences, Academy of Mathematics and Systems Science, Chinese Academy of Sciences, Beijing 100190, China}
\email{wangbiao@amss.ac.cn}
\thanks{Project funded by China Postdoctoral Science Foundation under grant number 2021TQ0350.}
\date{\today}
\title[The number of prime divisors]{Dynamics on the number of prime divisors for additive arithmetic semigroups}
\subjclass[2020]{37A44, 11R45, 11K06}
\keywords{Prime Number Theorem,  Liouville function, Largest prime factors,  Uniquely ergodic,  Uniform distribution}
\begin{document}
	
\begin{abstract}
In 2020, Bergelson and Richter gave a dynamical generalization of the classical Prime Number Theorem, which has been generalized by Loyd in a disjoint form with the Erd\H{o}s-Kac Theorem. These generalizations reveal the rich ergodic properties of  the number of prime divisors of integers.	In this article, we show a new generalization of Bergelson and Richter's Theorem in a disjoint form with the distribution of the largest prime factors of integers.  Then following Bergelson and Richter's techniques, we will show the analogues of all of these results  for the arithmetic semigroups arising from finite fields as well.
\end{abstract}
	
\maketitle

\section{Introduction and statement of results}
Let $n\in \N:=\set{1,2,3,\dots}$ be a natural number. Let $\Omega(n)$ be the total number of prime factors of $n$ counted with multiplicities. The distribution of values of $\Omega(n)$ involves some important information on the distribution of prime numbers. It is well-known (e.g., \cite{Landau1953, Mangoldt1897}) that the Prime Number Theorem (PNT) is equivalent to the assertion that
\begin{equation}\label{landau}
	\lim_{N\to\infty}\frac1N\sum_{n=1}^N\lambda(n)=0,
\end{equation}
where $\lambda(n):=(-1)^{\Omega(n)}$ is the Liouville function. That is, there are as many natural number $n\in \N$ for which $\Omega(n)$ is even  as $n\in \N$ for which $\Omega(n)$ is odd. 
In 2020, Bergelson and Richter \cite{BergelsonRichter2020} gave us a beautiful dynamical generalization of this form of the Prime Number Theorem as follows.   
\begin{theorem}[{\cite[Theorem A]{BergelsonRichter2020}}]
	\label{thm_BR2020thmA}
	Let $(X,\mu,T)$ be a uniquely ergodic additive topological dynamical system. Then we have
	\begin{equation}\label{eqn_BR2020thmA}
		\lim_{N\to\infty}\frac1N\sum_{n=1}^Nf(T^{\Omega(n)}x)=\int_Xf\,d\mu
	\end{equation}
	for every $x\in X$ and $f\in C(X)$. Here  we denote by $C(X)$ the space of continuous functions on $X$.
\end{theorem}

Theorem~\ref{thm_BR2020thmA} tells us that the sequence $\set{T^{\Omega(n)}x}_{n\in\N}$ is uniformly
distributed in any uniquely ergodic additive topological dynamical system $X$ for every point $x\in X$. If one takes $(X,T)$ to be the rotation on two points, then \eqref{landau} is recovered from \eqref{eqn_BR2020thmA}. Theorem~\ref{thm_BR2020thmA} also unifies many classical results in multiplicative number theory including a theorem of Pillai and Selberg and a theorem of Erd\H{o}s and Delange, see Sect.~\ref{sec_BR_applications}. Recently, Loyd \cite{Loyd2021} has generalized Bergelson and Richter's Theorem in a disjoint form of Theorem~\ref{thm_BR2020thmA} with the Erd\H{o}s-Kac Theorem as follows. Let $C_c(\R)$ denote the set of compactly supported continuous functions on $\R$. 

\begin{theorem}[{\cite[Theorem 1.3]{Loyd2021}}]
	\label{thm_Loyd2021}
	Let $(X, \mu, T)$ be uniquely ergodic, and let $F \in C_c(\R)$. Then we have
	\begin{equation}\label{eqn_Loyd2021}
		\lim_{N \to \infty} \frac1N\sum_{n=1}^N F \Big( \frac{\Omega(n) - \log \log N }{\sqrt{\log \log N}} \Big) f(T^{\Omega(n) }x) 
		=
		\Big(\frac{1}{\sqrt{2\pi}} \int_{-\infty}^{\infty} F(t) e^{-t^2/2} \, dt\Big)\Big( \int_X f \, d\mu \Big)
	\end{equation}
	for all $f \in C(X)$ and $x \in X$. 
\end{theorem}
If one takes $f$ to be a nonzero constant function, then one gets the Erd\H{o}s-Kac Theorem. Two sequences $a,b: \N \to \C$ are called \textit{asymptotically independent} if 
\[
\lim_{N\to\infty}\left(\AVG a(n) \overline{b(n)} - \bigg( \AVG a(n) \bigg)\bigg( \AVG \overline{b(n)} \bigg) \right)=0.
\]
By Theorem~\ref{thm_Loyd2021}, the sequences $\set{F \Big( \frac{\Omega(n) - \log \log N }{\sqrt{\log \log N}} \Big)}_{n=1}^N$ and $\set{f(T^{\Omega(n) }x)}_{n\in \N}$ are asymptotically independent. In this paper, we will  show a new disjoint form of Theorem~\ref{thm_BR2020thmA}  with the distribution of the largest prime factors of integers.

The largest prime factors of integers are well-distributed. Let $n\ge1$ be an integer. Denote by $p_{\max}(n)$ the largest prime factor of $n$ for $n\ge2$, and set $p_{\max}(1)=1$. In 1977, Alladi \cite[Theorem~1]{Alladi1977} showed that $p_{\max}(n)$ is equidistributed in arithmetic progressions. In 2020, Kural, McDonald and Sah \cite[Theorem~3.1]{KuralMcDonaldSah2020} generalized Alladi's result to the natural density over number fields. Let $S$ be a set of primes.  We say $S$ has \textit{natural density} $\delta(S)$ if the limit $\delta(S):=\lim_{N\to\infty}\#\set{p\le N: p\in S}/\pi(N)$ exists, where $\pi(N)$ denotes the number of primes up to $N$. Over the rational field $\Q$, Kural et al.'s result states that if $S$ is a set of primes of natural density $\delta(S)$, then
\begin{equation}
	\label{eqn_KMS}
	\lim_{N\to\infty}\frac1N\sum_{\substack{1\le n\le N\\ p_{\max}(n)\in S}} 1= \delta(S).
\end{equation}
This means that the largest prime factors of integers are well distributed along natural numbers. Moreover, \eqref{eqn_KMS} is an important step to establish Alladi-type formulas, which are another kind of generalizations of the Prime Number Theorem,
see \cite{Alladi1977, KuralMcDonaldSah2020, Wang2021jnt}. Recently, in an unpublished note with  Pan Yan, Liyang Yang and Shaoyun Yi, following the approaches in \cite{Alladi1977, KuralMcDonaldSah2020}, we have discovered that the following asymptotic estimate
\begin{equation}
	\label{eqn_Huristic}
	\sum_{\substack{1\le n\le N\\ p_{\max}(n)\in S}}f(n)\sim\delta(S)\sum_{1\le n\le N }f(n)
\end{equation}
holds for any divisor-bounded  multiplicative function $f(n)$ satisfying $f(p)=\alpha$ for all primes $p$, where $\alpha>0$ is some constant. In this article, for the function $f(T^{\Omega(n)}x)$, though it is not multiplicative, we observe that $f(T^{\Omega(p)}x)=f(Tx)$ is a constant for any prime $p$. At the same time, Bergelson and Richter's Theorem tells us the orbits $\set{T^{\Omega(n)}x}_{n\in\N}$ are uniformly distributed  for any $x\in X$. These facts lead us to speculate that the asymptotic estimate \eqref{eqn_Huristic} holds as well for $f(T^{\Omega(n)}x)$ and $x\in X$. 

\begin{theorem}
	\label{thm_Wang2021}
	Let $(X,\mu,T)$ be uniquely ergodic. Let $F:\N\to\C$ be a bounded arithmetic function and $\delta$ a constant such that 
	$$\sum_{1\le n\le N}F(p_{\max}(n))\sim \delta \cdot N.$$
	Then we have
	\begin{equation}\label{eqn_Wang2021}
		\lim_{N\to\infty}\frac1N\sum_{n=1}^NF(p_{\max}(n))f(T^{\Omega(n)}x)=\delta\int_X f\,d\mu
	\end{equation}
	for every $x\in X$ and $f\in C(X)$. In particular, if $S$ is a set of primes of natural density $\delta(S)$, then we have
	\begin{equation}\label{eqn_Wang2021_S}
		\lim_{N\to\infty}\frac1N\sum_{\substack{1\le n\le N\\p_{\max}(n)\in S}}f(T^{\Omega(n)}x)=\delta(S)\int_X f\,d\mu
	\end{equation}
	for every $x\in X$ and $f\in C(X)$.
\end{theorem}

By Theorem~\ref{thm_Wang2021}, the sequences $\set{F(p_{\max}(n))}_{n\in\N}$ and $\set{f(T^{\Omega(n) }x)}_{n\in \N}$ are asymptotically independent. In general, one may have the asymptotic independence between $\set{F(p_{\max}(n))}_{n\in\N}$ and $\set{g(S_ny)}_{n\in \N}$ for a finitely generated and strongly uniquely ergodic  multiplicative system $(Y,S)$ defined by Bergelson and Richter \cite{BergelsonRichter2020} and $y\in Y$.

Moreover, in number theory, it has long been recognized that algebraic function fields are natural analogues of ordinary algebraic number fields in many aspects.  The Prime Number Theorem is such a typical instance. Since  Theorems~\ref{thm_BR2020thmA}, \ref{thm_Loyd2021} and \ref{thm_Wang2021} are generalizations of the PNT, we are interested in the analogues of these results for finite fields and function fields. In this article, we will establish the analogues of \eqref{eqn_BR2020thmA}, \eqref{eqn_Loyd2021}, \eqref{eqn_Wang2021} and \eqref{eqn_Wang2021_S}
for additive arithmetic semigroups arising from finite fields. We state the results as follows.

An \textit{additive arithmetical semigroup} (cf. \cite{KnopfmacherZhang2001}) is a free Abelian semigroup $\cG$ with identity element $1$, generated  by a countable set $\cP$ of \textit{primes} in $\cG$ such that every non-identity element in $\cG$ uniquely factorizes into a finite product of powers of elements of $\cP$, together with a \textit{degree} mapping $\pl: \cG\to\N\cup\set{0}$ on $\cG$ such that
\begin{enumerate}[(1)]
	\item $\pl(1)=0, \pl(p)>0$ for $p\in\cP$;
	\item $\pl(gh)=\pl(g)+\pl(h)$ for all $g,h\in\cG$;
	\item For each $x>0$, the set $\set{g\in\cG:\pl(g)\le x}$ is finite.
\end{enumerate}

Let $\cG$ be an additive arithmetical semigroup. Let $\cG(n)$ be the set of elements in $\cG$ of degree $n$ and let $G(n)=|\cG(n)|$ be the number of elements in $\cG$ of degree $n$. We say $\cG$ is of \textit{Axiom $\mathcal{A}^\#$} type if there exist constants $A>0, q>1,0\le\eta<1$ depending only on $\cG$ such that 
\begin{equation}
	\label{eqn_axiom}
	G(n)=A q^n+O(q^{\eta n}).
\end{equation}
The set of monic polynomials over a finite field is a classical prototype of additive arithmetic semigroups of Axiom $\mathcal{A}^\#$ type, see Sect.~\ref{sec_aas_examples} for more examples.  Let $\cP(n)$ be the set of primes in $\cG$ of degree $n$ and let $\pi_\cG(n)=|\cP(n)|$ be the number of primes in $\cG$ of degree $n$. Let $\bar{\Lambda}(n)$ be the analogue of the Chebyshev function for $\cG$ defined by $$\bar{\Lambda}(n):=\sum_{\substack{p\in\cP, r\ge1\\ \pl(p^r)=n}}\pl(p).$$ By Lemma 3.5.2, Theorem 3.4.5 and Theorem 5.1.1 in \cite{KnopfmacherZhang2001}, we have the following two types of abstract Prime Number Theorems under the assumption \eqref{eqn_axiom}:
\begin{align}
	\bar{\Lambda}(n)&=q^n+O(q^{\theta n});\label{eqn_classicalPNT}\\
	\bar{\Lambda}(n)&=q^n\big(1+(-1)^{n+1}\big)+O(q^{\theta n}) \label{eqn_nonclassicalPNT}
\end{align}
for some $\theta$ with $\eta<\theta<1$. We call \eqref{eqn_classicalPNT} a classical type of PNT, from which we have $\pi_\cG(n)\sim q^n/n$ as $n\to\infty$. Eq. \eqref{eqn_nonclassicalPNT} is said to be non-classical. We only consider the PNT of classical type in this paper.

For any $g$ in $\cG$, let $\Omega(g)$ be the total number of prime factors of $g$ with multiplicities counted. Let $\cG_n$ be the set of elements in $\cG$ of degree at most $n$, then $\cG_n$ is finite. The reason why we consider $\cG_n$ instead of $\cG(n)$ in the following theorems is that we employ primes of \textit{all} degrees in our proofs. The following theorem gives us the analogues of Bergelson and Richter's Theorem  (Theorem~\ref{thm_BR2020thmA}) and Loyd's Theorem (Theorem~\ref{thm_Loyd2021}) for additive arithmetic semigroups.  

\begin{theorem}
	\label{thm_Loyd2021_ff}
	Let $\cG$ be an additive arithmetic semigroup of Axiom $\mathcal{A}^\#$ type admitting a classical type of Prime Number Theorem.	Let $(X,\mu,T)$ be a uniquely ergodic additive topological dynamical system. Let $F\in C_c(\R)$, then we have
	\begin{align}
		\lim_{n\to\infty}\frac1{|\cG_n|}\sum_{ g\in\cG_n}f(T^{\Omega(g)}x)&=\int_Xf\,d\mu \label{eqn_BR2020_ff}\\
		\lim_{n\to\infty}\frac1{|\cG_n|}\sum_{ g\in\cG_n}F\Big(\frac{\Omega(g)-\log n}{\sqrt{\log n}}\Big)f(T^{\Omega(g)}x)&=\Big(\frac1{\sqrt{2\pi}}\int_{-\infty}^\infty F(t)e^{-t^2/2}\,dt\Big)\Big(\int_Xf\,d\mu\Big)\label{eqn_Loyd2021_ff}
	\end{align}
	for every $x\in X$ and $f\in C(X)$.
\end{theorem}

By \eqref{eqn_BR2020_ff}, the orbit $\set{T^{\Omega(g)}x}_{g\in \cG}$ is uniformly distributed in the uniquely ergodic additive topological dynamical system $X$ for every point $x\in X$.  If one takes $f$ to be a constant function in \eqref{eqn_Loyd2021_ff}, then one gets back to the analogue of the  Erd\H{o}s-Kac Theorem (e.g., see \cite{Liu2004}) for $\cG$. Taking $\cG$ to be the set of monic polynomials over a finite field, as a corollary of  Theorem~\ref{thm_Loyd2021_ff},  we get the following analogue of Bergelson and Richter's Theorem  for finite fields.

\begin{corollary}
	Let $\F_q$ be a finite field of $q$ elements. For any monic polynomial $M$, let $\Omega(M)$ be the total number of  irreducible monic polynomial factors of $M$ with multiplicities counted. Let $\cM_n$ be the set of monic polynomials in $\F_q[x]$ of degree at most $n$. Let $(X,\mu,T)$ be uniquely ergodic.  Then  we have
	\begin{equation}
		\lim_{n\to\infty}\frac1{|\cM_n|}\sum_{M\in \cM_n}f(T^{\Omega(M)}x)=\int_Xf\,d\mu
	\end{equation}
	for every $x\in X$ and $f\in C(X)$.
\end{corollary}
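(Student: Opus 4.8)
The plan is to recognise the set $\cM=\bigcup_{n\ge 0}\cM(n)$ of monic polynomials over $\F_q$ as an additive arithmetic semigroup of Axiom $\mathcal{A}^\#$ type which admits a classical type of Prime Number Theorem, and then to quote \eqref{mainthm_eq2} of Theorem~\ref{mainthm} directly. All the substance is contained in that theorem; what remains is a verification of its hypotheses for this particular $\cG$.

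First I would set up the semigroup. Under polynomial multiplication $\cM$ is a free abelian semigroup with identity the constant polynomial $1$, and since $\F_q[x]$ is a unique factorization domain, every non-constant monic polynomial factors uniquely into a product of powers of monic irreducibles; thus $\cP=$ (the monic irreducibles) is the requisite set of primes. With $\pl M:=\deg M$, axioms (1)--(3) are immediate: $\deg 1=0$ and $\deg p\ge 1$ for $p\in\cP$; $\deg(MN)=\deg M+\deg N$; and $\{M:\deg M\le x\}$ is finite because there are exactly $q^n$ monic polynomials of each degree $n$. This last fact also gives Axiom $\mathcal{A}^\#$ at once: $G(n)=|\cM(n)|=q^n$ for every $n\ge1$, so \eqref{axiom1} holds with $c=1$, the same $q$, and any $0\le\eta<1$ (the error term is identically $0$). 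Finally, the semigroup-theoretic prime divisor function $\Omega(g)$ specializes, under this identification, exactly to the number $\Omega(M)$ of monic irreducible factors of $M$ counted with multiplicity, and $\cG_n=\cM_n$ with $|\cM_n|=\sum_{k=0}^{n}q^k$.

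It remains to confirm that $\cM$ lies on the classical side of the dichotomy \eqref{classicalPNT}/\eqref{nonclassicalPNT}, and this is the only point that requires any care. Gauss's identity --- obtained by comparing the two expansions $\sum_{M}u^{\deg M}=\sum_{n\ge0}q^n u^n=(1-qu)^{-1}$ and $\prod_{p\in\cP}(1-u^{\deg p})^{-1}$ of the zeta function of $\cM$ --- yields $\sum_{d\mid n}d\,\pi(d)=q^n$. Since $\bar\Lambda(n)=\sum_{p\in\cP,\,r\ge1,\;\pl(p^r)=n}\pl(p)=\sum_{d\mid n}d\,\pi(d)$, we get $\bar\Lambda(n)=q^n$ identically, so \eqref{classicalPNT} holds with error term $0$ and any $\theta\in(\eta,1)$; in particular $\cM$ admits a Prime Number Theorem of classical type. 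Applying \eqref{mainthm_eq2} with $\cG=\cM$ now reproduces the displayed limit word for word, completing the proof. No genuine obstacle arises here; the work is entirely in Theorem~\ref{mainthm}, and the explicit formula $\bar\Lambda(n)=q^n$ is what certifies that $\F_q[x]$ is eligible for it.
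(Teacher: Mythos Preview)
Your proof is correct and follows the same approach as the paper: the corollary is derived simply by recognizing the monic polynomials over $\F_q$ as an additive arithmetic semigroup of Axiom $\mathcal{A}^\#$ type with a classical Prime Number Theorem (Example~2.1 in the paper records exactly this) and then invoking \eqref{mainthm_eq2} of Theorem~\ref{mainthm}. Your explicit verification that $\bar\Lambda(n)=q^n$ via Gauss's identity is a nice touch that the paper leaves implicit.
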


Let $\cG$ be as in Theorem~\ref{thm_Loyd2021_ff}. Taking $(X,T)$ to be the rotation by $m$ points, i.e. $X=\set{0,1,\dots,m-1}$ and $T(x)=x+1\mod{m}$, and taking $f=1_{r\, {\rm{mod}}\, m},x=0$ in \eqref{eqn_BR2020_ff} for $0\le r\le m-1$, one gets the following density theorem.  It is a generalization of \cite[Theorem 4.1.1]{KnopfmacherZhang2001} dealing with the case $m=2$. For a subset $H$ of $\cG$, we say $H$ has \textit{natural density} $\delta(H)$ if the limit  $\delta(H):=\lim_{n\to\infty}|H\cap \cG_n|/|\cG_n|$ exists.

\begin{corollary} 
	\label{cor_equidistribution_modm_ff}
	Given an integer $m\ge2$. Then for any $0\le r\le m-1$, the set of elements in $\cG$ such that $\Omega(g)\equiv r\mod{m}$ has natural density $1/m$ in $\cG$.
\end{corollary}

A sequence $\set{a(g)}_{g\in\cG}\subset \R$ is said to be \textit{uniformly distributed mod 1} over $\cG$ if
\begin{equation}
	\label{eqn_def_ud_mod1_ff}
	\lim_{n\to\infty}\frac1{|\cG_n|}\sum_{ g\in\cG_n}f(a(g))=\int_0^1f(t)dt
\end{equation}
for all continuous functions $f:\R/\Z\to\C$. Let  $\alpha$ be an irrational number. Taking $(X,T)$ to be the rotation by $\alpha$ in Theorem~\ref{thm_Loyd2021_ff} and using Weyl's criterion \cite{Weyl1916}, we get the uniform distribution mod $1$ of $\set{\Omega(g)\alpha}_{g\in\cG}$.

\begin{corollary} 
	Let  $\alpha\in\R\setminus\Q$. Then $\set{\Omega(g)\alpha}_{g\in\cG}$ is uniform distributed mod 1 over $\cG$.
\end{corollary}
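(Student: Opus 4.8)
The plan is to obtain the corollary by feeding a well-chosen dynamical system into \eqref{mainthm_eq2}. I would take $X=\R/\Z$, $T(x)=x+\alpha\pmod 1$, and $\mu$ the Lebesgue (Haar) probability measure on $\R/\Z$. The one nontrivial input is classical: since $\alpha\notin\Q$, the rotation $(X,\mu,T)$ is uniquely ergodic, with $\mu$ its unique $T$-invariant Borel probability measure --- this is immediate from the observation that any $T$-invariant measure has all nonzero Fourier coefficients equal to zero, or alternatively from Weyl's equidistribution theorem. In particular Theorem~\ref{mainthm} applies to $(X,\mu,T)$.

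Next I would invoke Weyl's criterion: a sequence $\set{a(g)}_{g\in\cG}$ satisfies \eqref{unid} if and only if
\[
\lim_{n\to\infty}\frac1{|\cG_n|}\sum_{g\in\cG_n}e^{2\pi i h\,a(g)}=0\qquad\text{for every }h\in\Z\setminus\set{0}.
\]
Apply \eqref{mainthm_eq2} with the base point $x=0$ and the test function $f_h(x)=e^{2\pi i h x}\in C(\R/\Z)$: since $T^{\Omega(g)}0=\Omega(g)\alpha\pmod 1$, the left-hand side of \eqref{mainthm_eq2} equals $\lim_{n\to\infty}\frac1{|\cG_n|}\sum_{g\in\cG_n}e^{2\pi i h\,\Omega(g)\alpha}$, while the right-hand side is $\int_0^1 e^{2\pi i h t}\,dt=0$ for $h\neq 0$. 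This is exactly the Weyl sum condition for $a(g)=\Omega(g)\alpha$, so the corollary follows.

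One could also skip Weyl's criterion entirely and apply \eqref{mainthm_eq2} directly to an arbitrary $f\in C(\R/\Z)$ with $x=0$, which yields $\frac1{|\cG_n|}\sum_{g\in\cG_n}f(\Omega(g)\alpha)\to\int_{\R/\Z}f\,d\mu=\int_0^1 f(t)\,dt$, literally \eqref{unid}. Either way, there is no genuine obstacle here: all the substantive work is contained in Theorem~\ref{mainthm}, and the only point requiring care is the standard fact that an irrational rotation is uniquely ergodic with Lebesgue measure as its invariant measure.
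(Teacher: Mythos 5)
Your proposal is correct and matches the paper's approach exactly: the paper likewise obtains the corollary by taking $(X,T)$ to be the irrational rotation by $\alpha$ (uniquely ergodic with Haar measure) and combining \eqref{mainthm_eq2} with Weyl's criterion. Your alternative remark, that one can apply \eqref{mainthm_eq2} directly to an arbitrary $f\in C(\R/\Z)$ and read off \eqref{unid} without invoking Weyl's criterion, is a harmless simplification of the same argument.
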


Now, we introduce the analogue of Theorem~\ref{thm_Wang2021} for additive arithmetic semigroups. Let $\cG$ be an additive arithmetic semigroup. Let $\cP$ be the set of primes in $\cG$. For an element $g\in \cG$, let $\pl_{\max}(g):=\max\set{\pl(p):p|g,p\in\cP}$ be the largest degree of the prime factors of $g$, and let $Q(g)$ be the total number of prime factors of $g$ attaining the largest degree $\pl_{\max}(g)$. For example, if $g=p_1^{\alpha_1}p_2^{\alpha_2}p_3^{\alpha_3}$ with $\pl(p_1)<\pl(p_2)=\pl(p_3), p_1,p_2,p_3\in\cP$, then $Q(g)=\alpha_2+\alpha_3$. Let $\cG^+:=\set{g\in \cG: Q(g)=1}$. For $g\in \cG^+$, we denote by $p_{\max}(g)$ the prime factor of $g$ attaining the largest degree, and we call $p_{\max}(g)$ the largest prime factor of $g$. For natural numbers, it is a fact that the power of $p_{\max}(n)$ in the prime factorization of $n$ is $1$ for almost all $n\in \N$, see  \cite[Theorem (1.7)]{IvicPomerance1984}. Analogously, in Sect.~\ref{sec_pmax} we will show that almost all elements of $\cG$ are in $\cG^+$. In other words, the complement $\cG\setminus \cG^+$ is of zero natural density. For a subset $S$ of primes in $\cP$, we say $S$ has \textit{natural density}\footnote{If the limit in \eqref{eqn_def_density} exists, then the following limit $\lim\limits_{n\to\infty}\frac{|S\cap \cG_n|}{|\cP\cap \cG_n|}$ exits and equals to $\delta(s)$. But the converse is not true.A counterexample is the set of all primes of even degrees.} $\delta(S)$ if the following limit exists
\begin{equation}\label{eqn_def_density}
	\delta(S):=\lim_{n\to\infty}\frac{|S\cap \cG(n)|}{\pi_\cG(n)}.
\end{equation} 

Let $\cG^+_n$ be the set of elements in $\cG^+$ of degree at most $n$. In Sect.~\ref{sec_pmax} we will show that if $S$ is a set of primes in $\cP$ of natural density $\delta(S)$, then the following analogue of Eq. \eqref{eqn_KMS} holds
\begin{equation}\label{eqn_equidistribution_density}
	\lim_{n\to\infty}\frac1{|\cG_n^+|}\sum_{\substack{g\in \cG^+_n\\ p_{\max}(g)\in S}} 1= \delta(S).
\end{equation}

Now, we state the analogue of Theorem~\ref{thm_Wang2021} for $\cG$ as follows. 

\begin{theorem}
	\label{thm_Wang2021_ff}
	Let $\cG$ be an additive arithmetic semigroup of Axiom $\mathcal{A}^\#$ type admitting a classical type of Prime Number Theorem. Let $F:\cG\to\C$ be a bounded function and $\delta$ a constant  such that 
	$$\sum_{g\in \cG_n^+}F(p_{\max}(g))\sim \delta \cdot |\cG_n^+|.$$
	Let $(X,\mu,T)$ be uniquely ergodic. Then we have
	\begin{equation}\label{eqn_Wang2021_pmax_ff}
		\lim_{n\to\infty}\frac1{|\cG_n^+|}\sum_{g\in \cG_n^+}F(p_{\max}(g))f(T^{\Omega(g)}x)=\delta\int_X f\,d\mu
	\end{equation}
	for every $x\in X$ and $f\in C(X)$.	In particular, if $S$ is a set of primes in $\cP$ of natural density $\delta(S)$, then we have
	\begin{equation}\label{thm_Wang2021_ff_eq1}
		\lim_{n\to\infty}\frac1{|\cG_n^+|}\sum_{\substack{ g\in\cG_n^+\\p_{\max}(g)\in S}}f(T^{\Omega(g)}x)=\delta(S)\int_Xf\,d\mu.
	\end{equation}
\end{theorem}

\begin{remark}
	One may replace $\Omega(g)$ by $\omega(g)$ for Theorems~\ref{thm_Loyd2021_ff} and  \ref{thm_Wang2021_ff}, where $\omega(g)$ is the number of distinct prime factors of $g$. For the other analogues of these results, we leave the interested readers to investigate the arithmetic semigroups of Axiom $\mathcal{A}$ type (cf. \cite{Knopfmacher1975}) or other types that admit the Prime Number Theorem.
\end{remark}

\noindent\textbf{Structure of the paper.} In Sect.~\ref{sec_preliminaries}, we will introduce some background materials on unique ergodicity, applications of Theorem~\ref{thm_BR2020thmA}, and  additive arithmetic groups. Then in Sect.~\ref{sec_pmax}, using an elementary estimate for smooth elements we will show that $\cG\setminus \cG^+$ is of zero natural density and that Eq. \eqref{eqn_equidistribution_density} holds. 
In Sect.~\ref{sec_BR_techniques_pf_Wang2021}, we cite Bergelson and Richter's techniques in \cite[Theorem A]{BergelsonRichter2020} and apply them to show Theorem~\ref{thm_Wang2021}. Then we establish these techniques for additive arithmetic semigroups in Sect.~\ref{sec_BR_techniques_ff} and prove Theorems~\ref{thm_Loyd2021_ff} and \ref{thm_Wang2021_ff} in Sect.~\ref{sec_proof_mainthm_ff}. In their proof of Theorem~\ref{thm_BR2020thmA}, Bergelson and Richter use the $\rho$-adic intervals $[\rho^j,\rho^{j+1})$ to break the line $[1,\infty)$; in our proof of Theorems~\ref{thm_Loyd2021_ff} and \ref{thm_Wang2021_ff}, we use the set $\cG(j)$ of elements of degree $j$ to break $\cG$ up into stages.

\noindent\textbf{Notation.} In this paper,  the symbol $1_{\text{statement } P}$ is equal to 1 if the statement $P$ is true and zero otherwise.  The cardinality of a set $S$ is denoted by $|S|$ or $\# S$. We denote by $a_n\sim b_n \,\,(n\to\infty)$ if $\lim_{n\to\infty}a_n/b_n=1$. The big-$O$ notation $f(n) = O(g(n)$ or $f(n)\ll g(n)$ means that there is a positive constant $C>0$ such that $|f(n)| \le C|g(n)|$ for all $n\ge1$, and
$C$ is called the \textit{implied constant} of this $O$-term. The implied constant $C$ may depend on $\cG$ or other parameters, but it does not depend on $n$. The little-$o$ notation $f(n)=o(g(n))$ or $f(n)=o_{n\to\infty}(g(n))$ means that $\lim_{n\to\infty}f(n)/g(n)=0$.

\section{Preliminaries}
\label{sec_preliminaries}

\subsection{Unique Ergodicity}

Let $X$ be a compact metric space, and let $T:X\to X$ be a continuous map. Then the pair $(X,T)$ is called an \textit{additive topological dynamical system}. A Borel probability measure $\mu$ on $X$ is called \textit{T-invariant} if $\mu(T^{-1}A) = \mu(A)$ for all measurable subsets $A\subset X$. Every additive topological dynamical system has at least one $T$-invariant measure due to the Bogolyubov-Krylov theorem (e.g., \cite[Corollary 6.9.1]{Walters1982}). If a topological system $(X, T)$ admits only one $T$-invariant measure $\mu$, then $(X, T)$ is called \textit{uniquely ergodic}. It is well-known (e.g., \cite[Theorem 6.19]{Walters1982}) that $(X, \mu, T)$ is uniquely ergodic if and only if 
\begin{equation}\label{eqn_ergodicity}
	\lim_{N \to \infty} \frac1N\sum_{n=1}^N  f(T^n x) = \int_X f \, d\mu
\end{equation}
holds for  all $x \in X$ and $f\in C(X)$. Bergelson and Richter's Theorem tells us that Eq. \eqref{eqn_ergodicity} also holds if the orbit $\set{T^nx: n\in\N}$ is replaced by the orbit $\set{T^{\Omega(n)}x: n\in\N}$ along $\Omega(n)$ for every point $x\in X$.

\subsection{Applications of Theorem~\ref{thm_BR2020thmA}}
\label{sec_BR_applications}
Let $m\ge2$. Taking $(X,T)$ to be the rotation on $m$ points in Theorem~\ref{thm_BR2020thmA}, one obtains the following theorem due to Pillai and Selberg.

\begin{theorem}[Pillai \cite{Pillai1940}, Selberg \cite{Selberg1939}]
	\label{thm_PS}
	Let $m\ge2$ be an integer. Then for any $0\le r\le m-1$, the set of natural numbers  $n\in\N$ such that $\Omega(n)\equiv r\mod{m}$ has natural density $\frac1m$.	
\end{theorem}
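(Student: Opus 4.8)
The plan is to obtain this statement as a direct specialization of Theorem~\ref{Thm:BergelsonRichter2020}. First I would fix the dynamical system: let $X=\set{0,1,\dots,m-1}$ with the discrete metric and let $T\colon X\to X$ be the rotation $T(x)=x+1\mod{m}$. Since $X$ is finite it is a compact metric space and $T$ is continuous, so $(X,T)$ is a topological dynamical system. The one structural point to check is that $(X,T)$ is uniquely ergodic: because $T$ acts as a single cyclic permutation of the $m$ points, any $T$-invariant Borel probability measure must assign equal mass to each point of the unique $T$-orbit, hence coincides with the uniform measure $\mu$ given by $\mu(\set{j})=1/m$ for every $j$. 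Thus $(X,\mu,T)$ is uniquely ergodic, and Theorem~\ref{Thm:BergelsonRichter2020} applies.

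Next I would feed a suitable test function into \eqref{BRthmA}. Fix $0\le r\le m-1$ and take $f=1_{\set{r}}$, the indicator of the point $r$; this is continuous on the discrete space $X$, so $f\in C(X)$. Taking $x=0$, one has $T^{\Omega(n)}x=\Omega(n)\mod{m}$, so that $f(T^{\Omega(n)}x)=1$ precisely when $\Omega(n)\equiv r\mod{m}$ and $f(T^{\Omega(n)}x)=0$ otherwise. Hence the left-hand side of \eqref{BRthmA} becomes
$$\lim_{N\to\infty}\frac1N\#\set{1\le n\le N:\Omega(n)\equiv r\mod{m}},$$
which is by definition the natural density of $\set{n\in\N:\Omega(n)\equiv r\mod{m}}$. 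On the other hand the right-hand side of \eqref{BRthmA} equals $\int_X f\,d\mu=\mu(\set{r})=1/m$. Combining the two sides yields the asserted density $1/m$ for every residue $r$.

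There is essentially no serious obstacle here once Theorem~\ref{Thm:BergelsonRichter2020} is available; the argument is a clean corollary. The only step warranting (routine) justification is the unique ergodicity of the $m$-point rotation, which follows from the transitivity of the $T$-action as above — or, alternatively, from the characterization \eqref{ergodic} together with the elementary computation $\frac1N\sum_{n=1}^N 1_{\set{r}}(n\mod m)\to 1/m$. (For $m=2$ this recovers Landau's equivalent form \eqref{landau} of the Prime Number Theorem, by comparing the densities for $r=0$ and $r=1$.)
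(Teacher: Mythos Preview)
Your proposal is correct and matches the paper's own derivation: the paper explicitly obtains the Pillai--Selberg theorem by taking $(X,T)$ to be the rotation on $m$ points in Theorem~\ref{Thm:BergelsonRichter2020}, and the same specialization $f=1_{\set{r}}$, $x=0$ is spelled out verbatim just before Corollary~\ref{cormodm}. You have simply filled in the routine details (unique ergodicity of the cyclic rotation, identification of both sides of \eqref{BRthmA}) that the paper leaves implicit.
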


A sequence $\{a(n)\}_{n \in \N} \subset \R$ is \textit{uniformly distributed mod 1} if 
\[
\lim_{N \to \infty} \frac1N\sum_{n=1}^N f(a(n)) = \int_0^1 f(t)dt 
\]
for all continuous functions $f: \R/\Z \to \C$. 
Taking $(X,T)$ to be the rotation by $\alpha$  in Theorem~\ref{thm_BR2020thmA}, one obtains the following theorem which was first mentioned by Erd\H{o}s \cite[p. 2]{Erdos1946} without proof
but later proven by Delange \cite{Delange1958}.
\begin{theorem}[Erd\H{o}s \cite{Erdos1946}, Delange \cite{Delange1958}]
	\label{thm_ED}
	Let $\alpha \in \R\setminus \Q$. Then $\{\Omega(n)\alpha\}_{n \in \N}$ is uniformly distributed mod 1. 
\end{theorem}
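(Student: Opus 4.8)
The plan is to deduce this theorem from Theorem~\ref{Thm:BergelsonRichter2020} by combining it with Weyl's equidistribution criterion, as already hinted in the line preceding the statement. Recall that a real sequence $\{a(n)\}_{n\in\N}$ is uniformly distributed mod $1$ if and only if
\[
	\lim_{N\to\infty}\frac1N\sum_{n=1}^N e^{2\pi i h\,a(n)}=0
\]
for every nonzero integer $h$. So it is enough to verify this identity for $a(n)=\Omega(n)\alpha$ and each fixed $h\in\Z\setminus\{0\}$.

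First I would introduce, for the given irrational $\alpha$, the topological dynamical system $(X,T)$ with $X=\R/\Z$ the circle and $T\colon x\mapsto x+\alpha$. Since $\alpha$ is irrational this rotation is uniquely ergodic, its unique $T$-invariant Borel probability measure $\mu$ being Lebesgue measure on $\R/\Z$; indeed unique ergodicity is equivalent to \eqref{ergodic}, which in this case is the classical Weyl equidistribution theorem for the sequence $n\alpha$. Next I would apply Theorem~\ref{Thm:BergelsonRichter2020} to this system with the base point $x=0$ and the continuous test function $f(t)=e^{2\pi i h t}$ on $\R/\Z$. Since $T^{\Omega(n)}0=\Omega(n)\alpha$ in $\R/\Z$, the left-hand side of \eqref{BRthmA} becomes exactly $\frac1N\sum_{n=1}^N e^{2\pi i h\,\Omega(n)\alpha}$, while the right-hand side is $\int_0^1 e^{2\pi i h t}\,dt=0$ because $h\neq 0$. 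Thus the required Weyl sums tend to $0$ and the theorem follows.

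I do not expect any genuine obstacle here, since Theorem~\ref{Thm:BergelsonRichter2020} is quoted as a black box; the only point worth stating carefully is the passage through Weyl's criterion, i.e.\ that testing \eqref{BRthmA} against every character $t\mapsto e^{2\pi i h t}$ on the circle is enough to conclude uniform distribution (equivalently, that the Ces\`aro push-forward of the sequence $n\mapsto\Omega(n)\alpha$ is Lebesgue measure on $\R/\Z$). The identical argument with the two-point (more generally $m$-point) rotation in place of the circle rotation recovers the Pillai--Selberg theorem recorded just above.
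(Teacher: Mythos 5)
Your proposal is correct and is essentially the paper's own argument: the paper likewise obtains this theorem by applying Theorem~\ref{Thm:BergelsonRichter2020} to the uniquely ergodic rotation by $\alpha$ on $\R/\Z$. The only cosmetic difference is that, with the paper's definition of uniform distribution mod 1, one may test \eqref{BRthmA} directly against every continuous $f\colon\R/\Z\to\C$ rather than passing through Weyl's criterion with the characters $e^{2\pi i h t}$, but both routes are immediate and equivalent.
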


In \cite{BergelsonRichter2020}, Bergelson and Richter give a polynomial generalization of Theorem~\ref{thm_ED} by combining Furstenberg's method with Theorem~\ref{thm_BR2020thmA}. It is a variant of Weyl's theorem \cite{Weyl1916} asserting that  a polynomial sequence $Q(n)=c_k n^k + \ldots + c_1 n + c_0$, $n\in\N$, is uniformly distributed mod~$1$ if and only if at least one of the coefficients $c_1,\ldots,c_k$ is irrational.

\begin{theorem}[{\cite[Corollary 1.2]{BergelsonRichter2020}}]
	\label{thm_polynomoial_ud_Omega}
	Let $Q(n)=c_k n^k + \ldots + c_1 n + c_0$ with $c_0,\dots, c_k\in \R$. Then the sequence $\set{Q(\Omega(n))}_{n\in\N}$ is uniformly distributed mod~$1$ if and only if at least one of the coefficients $c_1,\ldots,c_k$ is irrational.
\end{theorem}

We refer readers to Bergelson and Richter’s article \cite{BergelsonRichter2020} for more applications of Theorem~\ref{thm_BR2020thmA}.

\subsection{Additive arithmetic semigroups}
\label{sec_aas_examples}

Let $\cG$ be an additive arithmetical semigroup with degree mapping $\pl$. We note that by definition $\pl(g)=0$ if and only if $g=1$ for any $g\in\cG$. Fix a positive $q>1$. Define $|g|:=q^{\pl(g)}$ for all $g\in \cG$, then it endows a \textit{norm} $|\cdot|$ on $\cG$. For $g,h\in\cG$, write $g=\prod_{i} p_i^{\alpha_i}$ and $h=\prod_{i}p_i^{\beta_i}$, where $\alpha_i,\beta_i\ge0$ for all $i$. We denote by $g|h$, if $\alpha_i\le\beta_i$ for all $i$. If $g|h$, then we call $g$ a \textit{factor} or \textit{divisor} of $h$ and $h$ a \textit{multiple} of $g$. We say $d:=\gcd(g,h)$ is the \textit{greatest common divisor} of $g$ and $h$, if for any common divisor $d'$ of $g$ and $h$ we have $d'|d$.  We say $m:=\lcm(g,h)$ is the \textit{least common multiple} of $g$ and $h$, if for any common multiple $m'$ of $g$ and $h$ we have $m|m'$.  One can verify that $\gcd(g,h)=\prod_{i}p_i^{\min\{\alpha_i,\beta_i\}}$, $\lcm(g,h)=\prod_{i}p_i^{\max\{\alpha_i,\beta_i\}}$, and $gh=\gcd(g,h)\lcm(g,h)$. Therefore, we have $|g||h|/|\lcm(g,h)|=|\gcd(g,h)|$ for all $g,h\in\cG$.

Next, we list some examples of additive arithmetical semigroups of Axiom $\mathcal{A}^\#$ type with classical Prime Number Theorems.  Details of illustrations on the last two examples can be found in \cite{DuanMaYi2021}. And we refer readers to Knopfmacher and Zhang's book \cite{KnopfmacherZhang2001} for more examples. Let $\F_q$ be a finite field of $q$ elements with $q$ a prime power.

\begin{example}[Finite fields]
	Let $\cG$ be the set of all monic polynomials in $\F_q[x]$. Then it is an additive arithmetical semigroup with 
	$G(n)=q^n$. And the Prime Polynomial Theorem asserts that 
	$$\pi_\cG(n)=\frac{q^n}n+O\of{\frac{q^{n/2}}n}.$$	
\end{example}

\begin{example}[Function fields]
	Let $K$ be a global
	function field with constant field $\F_q$. A prime divisor or prime in $K$ is a discrete valuation ring $R_P$ containing $\F_q$ with maximal ideal $P$ such that the quotient field of $R_P$ is $K$. The size of the residue field $R_P/P$ is a power $q^{\pl(P)}$ of $q$ for some exponent $\pl(P)$. Let $\cG$  be the free Abelian semigroup generated by prime divisors of $K$. That is, $\cG$ is the set of the so-called effective divisors on $K$. Then $(\cG,\pl)$ is an additive arithmetical semigroup. By the Riemann-Roch theorem, it is well-known (cf. Rosen \cite{Rosen2002}, say) that 
	$$G(n)=h_K\frac{q^{n-g_K+1}-1}{q-1}$$
	for $n>2g_K-2$, where $g_K$ is the genus of $K$, and $h_K$ is the class number of $K$. Hence $G(n)=c_Kq^n+O(1)$,
	where $c_K=h_Kq^{1-g_K}/(q-1)$, which means $\cG$ is of Axiom $\mathcal{A}^\#$ type. And the Prime Number Theorem for function fields asserts that
	$$\pi_\cG(n)=\frac{q^n}n+O\of{\frac{q^{n/2}}n}.$$
\end{example}

\begin{example}[Algebraic varieties]
	Let $X$ be a $d$-dimensional projective smooth variety defined over $\F_q$, $d\ge1$. A \textit{prime 0-cycle} $P$ of $X$ is the finite sum of all the distinct Galois conjugations of a geometric point in $X(\overline{\F}_q)$ over the ground field $\F_q$. The \textit{degree} $\pl(P)$ is the minimal
	positive integer $r$ such that $P$ splits over $\F_{q^r}$. Let $\cG$ be the free Abelian semigroup generated by prime 0-cycles of $X$. Then applying the Weil conjecture \cite{Weil1949} settled by a series of works of Dwork, Grothendieck and Deligne \cite{Dwork1960, Grothendieck1995, Deligne1974}, one can deduce that
	$$\pi_\cG(n)=\frac{q^{dn}}n+O\of{\frac{q^{(d-\frac12)n}}n}.$$
	By \cite{Indlekofer1991}, we have
	$$G(n)=c_Xq^{dn}+O\of{q^{(d-\frac12)n}}$$
	for some constant $c_X>0$. Hence $\cG$ is an additive arithmetic semigroup of Axiom $\mathcal{A}^\#$ type.
\end{example}

\begin{example}[Grassmannian varieties]
	Let $\ell>k\ge1$ be integers.  For any linear vector space $V$ over $\F_q$, let $G(k + 1, V )$ be the set of $(k+1)$-dimensional linear subspaces of $V$. For every $k+1$-dimensional subspace $\tilde{H}$ of $\mathbb{A}_{\overline{\F}_q}^{\ell+1}$, let $H$ be the finite union of all the distinct Galois conjugates of $\tilde{H}$. Then $H$ is  a reduced variety over $\F_q$. The degree of $H$ is defined to be the number of geometric irreducible components of $H$. 
	Let $\cP_{k,\ell}:=\set{H: \tilde{H}\in G(k+1,\mathbb{A}_{\overline{\F}_q}^{\ell+1})}$, and let $\cG_{k,\ell}$ be the set of finite scheme unions of varieties in  $\cP_{k,\ell}$. Then $\cG_{k,\ell}$ is an additive arithmetical semigroup of Axiom $\mathcal{A}^\#$ type (see \cite{DuanMaYi2021}) with 
	$$G(n)=c_{\cG_{k,\ell}}q^{dn}+O\of{q^{\eta dn}}$$
	for some constant $c_{\cG_{k,\ell}}>0$ and $0<\eta<1$, where $d=(k+1)(\ell-k)$. And 
	$$\pi_\cG(n)=\frac{q^{dn}}n+O\of{\frac{q^{\eta dn}}n}.$$
	A remark is that by the Pl\"ucker embedding \cite[Lecture 6]{Harris1995}, one can identity  $\cG_{k,\ell}$ to the semigroup of 0-cycles of the Grassmannian variety $\mathbb{G}(k,\P_{\F_q^\ell})$.
\end{example}

\section{The prime factors of largest degree}
\label{sec_pmax}

Let $\cG$ be an additive arithmetic semigroup satisfying Eq. \eqref{eqn_axiom}. For $g\in \cG$, if $\pl_{\max}(g)\le m$ then we call $g$ an \textit{$m$-smooth element} in $\cG$.  Let $\Psi_\cG(n,m):=\#\set{g\in \cG: \pl(g)\le n,\pl_{\max}(g)\le m}$ be the number\footnote{The definition here is slightly different from the customary $\Psi_\cG(n,m)$ defined by $\#\set{g\in \cG: \pl(g)= n,\pl_{\max}(g)\le m}$.} of $m$-smooth elements in $\cG_n$. By {\cite[Theorem 3.3.3]{KnopfmacherZhang2001}}, we have the following
Mertens-type estimate for $\cG$:
\begin{equation}\label{mertens_eq}
	\sum_{\pl(p)\le n}\frac{\pl(p)}{|p|}=n+O(1).
\end{equation}
Then using Rankin's method (cf. \cite[Theorem III.5.1]{Tenenbaum2015} or \cite[Theorem 4.6]{TenenbaumMendes-France2000}), we get the following upper bound for $\Psi_\cG(n,m)$.

\begin{lemma} 
	\label{lem_smooth_numbers}
	For $n\ge m>2/\log q$, we have
	\begin{equation}\label{eqn_smooth_numbers}
		\Psi_\cG(n,m)\ll q^ne^{-\frac{n}{2m}}.
	\end{equation}
\end{lemma}
\begin{proof}  Let $\chi(g,m)$ be the multiplicative function on $\cG$ defined by
	$$\chi(g,m):=\begin{cases}
		1, & \text{ if } \pl_{\max}(g)\le m;\\
		0, & \text{ if } \pl_{\max}(g)> m.
	\end{cases}$$
	Then $\Psi_\cG(n,m)=\sum_{g\in\cG_n}\chi(g,m)$. For all $\alpha\ge0$, we have
	\begin{align}
		\Psi_\cG(n,m)&\le \sum_{\pl(g)\le\frac34n}1+\sum_{\pl(g)\le n}\left(\frac{|g|}{q^{\frac34n}}\right)^\alpha\chi(g,m)\nonumber\\
		&\ll q^{\frac34n}+ q^{-\frac34n\alpha}\sum_{\pl(g)\le n}|g|^\alpha\chi(g,m).\label{sm_pf_eq1}
	\end{align}
	
	Take $\alpha:=2/(3m\log q)$, then $q^{-\frac34n\alpha}=e^{-\frac{n}{2m}}$.
	Let $f_\alpha(g)=\chi(g,m)|g|^\alpha\prod_{p|g}(1-|p|^{-\alpha})$. It is a multiplicative function on $\cG$. Then one can readily verify that 
	\begin{equation}
		|g|^\alpha\chi(g,m)\le \sum_{h|g}f_\alpha(h).
	\end{equation}
	It follows that
	\begin{align}
		\sum_{\pl(g)\le n}|g|^\alpha\chi(g,m)&\le 	\sum_{\pl(g)\le n}\sum_{h|g}f_\alpha(h)\nonumber\\
		&=\sum_{\pl(h)\le n}f_\alpha(h)|\cG_{n-\pl(h)}|\nonumber\\
		&\ll q^n\sum_{\pl(h)\le n}\frac{f_\alpha(h)}{|h|}
		\nonumber\\
		&\le q^n\sum_{\pl_{\max}(h)\le m}\frac{f_\alpha(h)}{|h|}
		\nonumber\\
		&=q^n\prod_{\pl(p)\le m}\Big(1+\sum_{j=1}^\infty \frac{f_\alpha(p^j)}{|p|^j}\Big). \label{sm_pf_eq2}
	\end{align}
	Notice that 
	$$\frac{f_\alpha(p^j)}{|p|^j}=\frac{|p|^{j\alpha}(1-|p|^{-\alpha})}{|p|^j}\le \frac{|p|^\alpha \alpha \log|p|}{|p|}(|p|^{\alpha-1})^{j-1}, j\ge1.$$
	By the choice of $\alpha$, we have $|p|^\alpha\le e^{2/3}$ and $|p|^{\alpha-1}<|p|^{-1/3}\le q^{-1/3}$. This gives us that $$\sum_{j=1}^\infty \frac{f_\alpha(p^j)}{|p|^j}=O\Big(\frac{\alpha\pl(p)}{|p|}\Big).$$
	Then by Mertens-type estimate \eqref{mertens_eq} for $\cG$, we have
	\begin{equation}\label{sm_pf_eq3}
		\prod_{\pl(p)\le m}\Big(1+\sum_{j=1}^\infty \frac{f_\alpha(p^j)}{|p|^j}\Big) \ll \exp\Big(O\Big(\sum_{\pl(p)\le m}\frac{\alpha\pl(p)}{|p|}\Big)\Big)\ll1.
	\end{equation}
	When $m>2/\log q$, we have $q^ne^{-n/(2m)}>q^ne^{-n\log q/4}=q^{3n/4}$.
	Thus, Eq. \eqref{eqn_smooth_numbers} follows by \eqref{sm_pf_eq1}, \eqref{sm_pf_eq2} and \eqref{sm_pf_eq3}.
\end{proof}

Next, we use Lemma~\ref{lem_smooth_numbers} to show that $\cG\setminus\cG^+$ is of zero natural density, which follows by the following lemma. 
\begin{lemma}
	\label{lem_size_of_cG}
	For any positive $0<\sigma<1$, we have
	\begin{equation}\label{pmaxerror_eq}
		|\cG_n|=|\cG^+_n|+O(q^nn^{-\sigma}).
	\end{equation}
\end{lemma}

\begin{proof}  We notice that $$|\cG_n|-|\cG^+_n|=|\cG_n\setminus \cG^+_n|,$$ 
	and we can write $|\cG_n\setminus \cG^+_n|$ in terms of $\Psi_\cG(n,m)$ as follows:
	\begin{align}
		|\cG_n\setminus \cG^+_n|&=\sum_{\substack{\pl(g)\le n\\Q(g)\ge2}}1=\sum_{\substack{p,q\in\cP,g\in\cG\\ \pl(pqg)\le n\\ \pl_{\max}(g)\le\pl(p)=\pl(q)}}1\nonumber\\
		&=\sum_{k\le \frac{n}2}\sum_{\substack{p,q\in\cP\\ \pl(p)=\pl(q)=k}}\sum_{\substack{g\in\cG\\ \pl(g)\le n-2k\\ \pl_{\max}(g)\le k}}1\nonumber\\
		&=\sum_{k\le \frac{n}2}\sum_{\substack{p,q\in\cP\\ \pl(p)=\pl(q)=k}} \Psi_\cG(n-2k,k)\nonumber\\
		&=\sum_{k\le \frac{n}2}\pi_\cG(k)^2 \Psi_\cG(n-2k,k). \label{pmaxerror_pf_eq1}
	\end{align}
	
	By \cite[Corollary 3.2.2]{KnopfmacherZhang2001}, we have  the Chebyshev-type upper bound $\pi_\cG(k)\ll q^k/k$ for $\pi_\cG(k)$. As regards $\Psi_\cG(n-2k,k)$, we will use the following three types of estimates for $\Psi_\cG(n,m)$. First, if $m\le C$ for some constant $C\ge 1$, then  $\Psi_\cG(n,m)\ll n^C$. Second, by Lemma~\ref{lem_smooth_numbers} we have $\Psi_\cG(n,m)\ll q^ne^{-\frac{n}{2m}}$ for $n\ge m>2/\log q$. Third, we have a trivial bound $\Psi_\cG(n,m)\le|\cG_n|\ll q^n$  holds uniformly for all $m$. For any positive $\sigma\in(0,1)$, we break Eq.~\eqref{pmaxerror_pf_eq1} up into three parts:
	\begin{align}
		|\cG_n\setminus \cG^+_n|&=\sum_{k\le \frac2{\log q}}\pi_\cG(k)^2 \Psi_\cG(n-2k,k)+ \sum_{\frac2{\log q}<k\le n^\sigma}\pi_\cG(k)^2 \Psi_\cG(n-2k,k)\nonumber\\
		&\qquad+\sum_{n^\sigma<k\le \frac{n}2}\pi_\cG(k)^2 \Psi_\cG(n-2k,k)\nonumber\\
		&\ll n^{2/\log q}+ \sum_{k\le n^\sigma}\frac{q^{2k}}{k^2} \cdot q^{n-2k} e^{-\frac{n-2k}{2k}}+ \sum_{k>n^\sigma}\frac{q^{2k}}{k^2} \cdot q^{n-2k}\nonumber\\
		&\ll n^{2/\log q}+ q^n\sum_{k\le n^\sigma}\frac1{k^2}e^{-\frac{n}{2k}}+ q^n\sum_{k>n^\sigma}\frac1{k^2}\nonumber\\
		&\ll n^{2/\log q}+ q^n e^{-\frac{n^{1-\sigma}}{2}}+ q^nn^{-\sigma}\nonumber\\
		&\ll q^nn^{-\sigma}.
	\end{align}
	Thus, we get \eqref{pmaxerror_eq}.
\end{proof}

Now, we use the ideas in \cite{KuralMcDonaldSah2020} and \cite{DuanWangYi2021} to show the following theorem which is equivalent to Eq.~\eqref{eqn_equidistribution_density}.

\begin{theorem}
	\label{thm_natural_density}
	Let $S$ be a set of primes of natural density $\delta(S)$. Then we have
	\begin{equation}\label{eqn_natural_density}
		\sum_{\substack{g\in \cG^+_n\\p_{\max}(g)\in S}}1=\delta(S) |\cG_n^+|+o(q^n).
	\end{equation}
\end{theorem}
\begin{proof} 
	First, we break the summation on the left-hand side of \eqref{eqn_natural_density} up into two parts:
	\begin{equation}\label{eqn_Sigma1_Sigma2}
		\sum_{\substack{g\in \cG^+_n\\p_{\max}(g)\in S}}1=\sum_{\substack{g\in \cG^+_n\\p_{\max}(g)\in S,\, \pl(p_{\max}(g))\le m}}1+\sum_{\substack{g\in \cG^+_n\\p_{\max}(g)\in S,\, \pl(p_{\max}(g))> m}}1:=\Sigma_1+\Sigma_2,
	\end{equation}
	where $1\le m\le n$ is to be determined.
	
	For $\Sigma_1$, by Lemma~\ref{lem_smooth_numbers}, we have
	\begin{equation}\label{eqn_Sigma_1}
		\Sigma_1\le \sum_{\substack{g\in \cG_n^+\\ \pl(p_{\max}(g))\le m}}1\le \Psi_\cG(n,m)\ll q^ne^{-\frac{n}{2m}}
	\end{equation}
	uniformly for any $S$.
	
	As regards $\Sigma_2$,	we write it  in terms of $\Psi_\cG(n,m)$ as follows:
	\begin{align}
		\Sigma_2&=\sum_{\substack{m<\pl(p)\le n\\ p\in S}}\sum_{\substack{g\in \cG^+_n\\p_{\max}(g)=p}}1\nonumber\\
		&=\sum_{\substack{m<\pl(p)\le n\\ p\in S}}\sum_{\substack{\pl(g)\le n-\pl(p)\\ \pl_{\max}(g)<\pl(p)}}1\nonumber\\
		&=\sum_{\substack{m<\pl(p)\le n\\ p\in S}}\Psi_\cG\big(n-\pl(p),\pl(p)-1\big)\nonumber\\
		&=\sum_{m<k\le n}\Psi_\cG(n-k,k-1)\pi_{\cG,S}(k) \label{eqn_Sigma_2},
	\end{align}
	where $\pi_{\cG,S}(k):=\#\set{p\in S:\pl(p)=k}$. 
	
	By \eqref{eqn_Sigma1_Sigma2}-\eqref{eqn_Sigma_2},
	we get that
	\begin{equation}\label{eqn_pf_Sigma}
		\sum_{\substack{g\in \cG^+_n\\p_{\max}(g)\in S}}1=\sum_{m<k\le n}\Psi_\cG(n-k,k-1)\pi_{\cG,S}(k)+O(q^ne^{-\frac{n}{2m}}).
	\end{equation}
	Taking $S=\cP$ in \eqref{eqn_pf_Sigma}, we have
	\begin{equation}\label{eqn_pf_cGplus}
		|\cG_n^+|=\sum_{m<k\le n}\Psi_\cG(n-k,k-1)\pi_\cG(k)+O(q^ne^{-\frac{n}{2m}}).
	\end{equation}
	
	It follows by \eqref{eqn_pf_Sigma} and \eqref{eqn_pf_cGplus} that
	\begin{equation}\label{eqn_Sigma3}
		\sum_{\substack{g\in \cG^+_n\\p_{\max}(g)\in S}}1 \,-\,\delta(S)|\cG_n^+|=\Sigma_3 +O(q^ne^{-\frac{n}{2m}}),
	\end{equation}
	where 
	$$\Sigma_3:=\sum_{m<k\le n}\Psi_\cG(n-k,k-1)(\pi_{\cG,S}(k)-\delta(S)\pi_\cG(k)).$$
	Then by the proof of \cite[Theorem 4.4]{DuanWangYi2021}, we have
	\begin{equation}\label{eqn_pf_Sigma3}
		\Sigma_3\ll nq^nv_{\cG,S}(m),
	\end{equation}
	where
	\begin{align*}
		e_{\cG,S}(n)&:=\sup_{k\le n}|\pi_{\cG,S}(k)-\delta(S)\pi_\cG(k)|,\\
		v_{\cG,S}(n)&:=\sup_{k\ge n}	\frac{e_{\cG,S}(k)}{k\pi_\cG(k)}
	\end{align*}
	for all $n\ge1$. From \eqref{eqn_Sigma3} and \eqref{eqn_pf_Sigma3}, we obtain that
	\begin{equation}
		\sum_{\substack{g\in \cG^+_n\\p_{\max}(g)\in S}}1 =\delta(S)|\cG_n^+|+O\of{ q^n\big(nv_{\cG,S}(m)+ e^{-\frac{n}{2m}}\big)}.
	\end{equation}
	
	As the argument in \cite[Theorem 4.4]{DuanWangYi2021} shows, there exists $m=k_n$ such that $\lim_{n\to\infty}nv_{\cG,S}(k_n)=0$ and $\lim_{n\to\infty}n/k_n=\infty$. Hence $O\of{ q^n\big(nv_{\cG,S}(m)+ e^{-\frac{n}{2m}}\big)}=o(q^n)$,
	and \eqref{eqn_natural_density} follows.
\end{proof}

\section{Bergelson and Richter's techniques and proof of Theorem~\ref{thm_Wang2021}}
\label{sec_BR_techniques_pf_Wang2021}

In this section, we first cite Bergelson and Richter's techniques which are developed to establish Theorem~\ref{thm_BR2020thmA}.  Then we apply them to show Theorem~\ref{thm_Wang2021} in Sect.~\ref{sec_pf_Wang2021}. In Sect.~\ref{sec_BR_techniques_ff}, we will show the analogue of these techniques for additive arithmetic groups.

\subsection{Bergelson and Richter's techniques}
\label{sec_BR_techniques}

For any positive $s>0$, we write $[s]:=\N\cap[1,s]$. Then the set $\{1,\ldots,N\}$ is denoted by $[N]$.
For a finite and non-empty set $B\subset \N$ and a function $a\colon B\to\C$, the \textit{Ces\`aro average} of $a$ over $B$ and the \textit{logarithmic average} of $a$ over $B$ are defined respectively by
$$
\BEu{n\in B} a(n) := \frac{1}{|B|}\sum_{n\in B} a(n)\qquad\text{and }\qquad
\BEul{n\in B} a(n) := \frac{\sum_{n\in B} {a(n)}/{n}}{\sum_{n\in B}{1}/{n}}.
$$

Denote by $\P$ the set of all primes in $\N$. Inspired by the following observation 
\begin{equation}
	\label{eqn_TK}
	\lim_{s\to\infty}\,\lim_{N\to\infty}\,\BEu{n\in [N]} \left|\BEul{p\in \P\cap [s]} \big(1- p 1_{p\mid n}\big)\right| \,=\,0
\end{equation}
due to Daboussi \cite[Lemma 1]{Daboussi1975} and K\'atai \cite[Eq. (3.1)]{Katai1986}, which follows from the Tur\'an-Kubilius inequality (cf. \cite[Lemma 4.1]{Elliott1971}), Bergelson and Richter generalize \eqref{eqn_TK} to the following inequality. It plays an important role in their proof of Theorem~\ref{thm_BR2020thmA}.

\begin{proposition}[{\cite[Proposition 2.1]{BergelsonRichter2020}}]
	\label{prop_BR_inequality}
	Let $B\subset\N$ be a finite and non-empty set of integers. Then for any bounded arithmetic function $a\colon\N\to\C$ with $|a|\le1$ we have that
	\begin{equation}
		\label{eqn_BR_inequality}
		\limsup_{N\to\infty}\,\left|\BEu{n\in [N]} a(n)\,-\,  \BEul{m\in B}\, \BEu{n\in [\frac{N}{m}]} a(mn) \right| \,\le\,\left( \BEul{m\in B}\BEul{n\in B}\Phi(n,m) \right)^{1/2},
	\end{equation}
	where $\Phi(m,n):=\gcd(m,n)-1$.
\end{proposition}

For the error term $\BEul{m\in B}\BEul{n\in B} \Phi(m,n)$, the higher chance of being coprime to each other for  integers in $B$,  the smaller the error is. The following Lemma provides a precise criterion on how to estimate it.

\begin{lemma}[{\cite[Lemmas 2.5 - 2.6]{BergelsonRichter2020}}]
	\label{lem_errorterm}
	Fix $\ve\in (0,1)$.
	\begin{enumerate}[(1)]
		\item Let $B\subset \P$ be a finite set of primes satisfying $\sum_{m\in B} 1/m \ge 1/\ve$. Then $
		\BEul{m\in B}\BEul{n\in B} \Phi(m,n)\,\le\, \ve.
		$
		
		\item  Let $P_1,P_2\subset \P$ be finite sets of primes satisfying $\sum_{p\in P_1} 1/p \ge {3}/{\ve}$ and $\sum_{q\in P_2} 1/q \ge {3}/{\ve}$. Let $B:= \{pq: p\in P_1,~ q\in P_2\}$. Then
		$
		\BEul{m\in B}\BEul{n\in B} \Phi(m,n)\,\le\, \ve.
		$
	\end{enumerate}
\end{lemma}

An integer $n\in \N$ is called a \textit{$k$-almost prime} if $\Omega(n)=k$. Let $\P_k:=\set{n\in \N:\Omega(n)=k}$ be the set of $k$-almost primes, $k\ge1$. Together with Lemma~\ref{lem_BR_keylemma2}, the following technical lemma guarantees the existence of two good finite sets $B_1$ and $B_2$ and will help us finish the proof of Theorem~\ref{thm_keythm_pmax}.

\begin{lemma}[{\cite[Lemma 2.2]{BergelsonRichter2020}}]
	\label{lem_BR_keylemma1}
	For all $\ve\in(0,1)$ and $\rho\in(1,1+\ve]$, there exist two finite and non-empty sets $B_1,B_2\subset\N$ satisfying the following properties: 
	\begin{enumerate}[(1)]
		\item\label{itm_a}
		$B_1\subset \P_1$ and $B_2\subset \P_2$;
		\item\label{itm_b}
		$|B_1\cap [\rho^j,\rho^{j+1})|=|B_2\cap [\rho^j,\rho^{j+1})|$ for all $j\in\N\cup\{0\}$;
		\item\label{itm_c}
		$\mathbb{E}^{\log}_{m\in B_1}\mathbb{E}^{\log}_{n\in B_1} \Phi(m,n)\le \ve$ and $\mathbb{E}^{\log}_{m\in B_2}\mathbb{E}^{\log}_{n\in B_2} \Phi(m,n)\le \ve$.
	\end{enumerate}
\end{lemma}

\begin{lemma}[{\cite[Lemma 2.3]{BergelsonRichter2020}}]
	\label{lem_BR_keylemma2}
	Fix $\ve\in(0,1)$ and $\rho\in(1,1+\ve]$. 
	Let $B_1$ and $B_2$ be two finite non-empty subsets of $\N$ satisfying property (2) in Lemma~\ref{lem_BR_keylemma1}. Then for any bounded arithmetic function $a\colon\N\to \C$ with $|a|\le 1$ we have 
	\begin{equation}
		\left|\,\BEul{p\in B_{1}} \, \BEu{n\in [\frac{N}{p}]} a(n) ~-~ \BEul{q\in B_{2}} \, \BEu{n\in [\frac{N}{q}]} a(n)\,\right| ~\le~ 3\ve.
	\end{equation}
\end{lemma}

\subsection{Proof of Theorem~\ref{thm_Wang2021}}
\label{sec_pf_Wang2021}

\begin{lemma}\label{lem_shift_invariant_pmax}
	Let $F,a:\N\to\C$ be two bounded functions.	Then for any integer $m\in\N$, we have that
	\begin{equation}\label{eqn_shift_invariant_pmax}
		\BEu{n\in[N]}F(p_{\max}(mn))a(n)=\BEu{n\in[N]}F(p_{\max}(n))a(n)+O(N^{-c}),
	\end{equation}
	for some constant $c>0$ depending only on $m$.
\end{lemma}

\begin{proof} Eq. \eqref{eqn_shift_invariant_pmax} is trivial for $m=1$. Suppose $m\ge2$.
	We break the average up into two parts as follows:
	\begin{align}
		\BEu{n\in[N]}F(p_{\max}(mn))a(n)&=\BEu{n\in[N]}1_{p_{\max}(n)\le m}F(p_{\max}(mn))a(n)+\BEu{n\in[N]}1_{p_{\max}(n)> m}F(p_{\max}(mn))a(n)\nonumber\\
		&=\BEu{n\in[N]}1_{p_{\max}(n)\le m}F(p_{\max}(mn))a(n)+\BEu{n\in[N]}1_{p_{\max}(n)> m}F(p_{\max}(n))a(n).\label{pmaxlem_pf_eq1}
	\end{align}
	For the first average, by the triangle inequality we have
	$$\abs{\BEu{n\in[N]}1_{p_{\max}(n)\le m}F(p_{\max}(mn))a(n)}\ll \BEu{n\in[N]}1_{p_{\max}(n)\le m}=\frac1N\Psi(N,m),$$
	where $\Psi(x,y):=\#\set{n\le x: p_{\max}(n)\le y}$ is the number of $y$-smooth numbers up to $x$.
	By Theorem~6 in \cite[Chapter 4]{TenenbaumMendes-France2000}, we have  $\Psi(x,y)\ll xe^{-u/2}$ with $u=\log x/\log y$ for $x\ge y\ge2$.  It follows  that $\frac1N\Psi(N,m)=O\of{e^{-c\log N}}=O(N^{-c})$ for some constant $c>0$ depending only on $m$, say $c=1/(2\log m)$. By \eqref{pmaxlem_pf_eq1}, we get that
	\begin{equation}\label{pmaxlem_pf_eq2}
		\BEu{n\in[N]}F(p_{\max}(mn))a(n)=\BEu{n\in[N]}1_{p_{\max}(n)> m}F(p_{\max}(n))a(n)+O(N^{-c}).
	\end{equation}
	Similarly, we have
	\begin{equation}\label{pmaxlem_pf_eq3}
		\BEu{n\in[N]}F(p_{\max}(n))a(n)=\BEu{n\in[N]}1_{p_{\max}(n)> m}F(p_{\max}(n))a(n)+O(N^{-c}).
	\end{equation}
	Hence \eqref{eqn_shift_invariant_pmax} follows immediately by \eqref{pmaxlem_pf_eq2} and \eqref{pmaxlem_pf_eq3}.
\end{proof}

\begin{theorem}\label{thm_keythm_pmax}
	Let $F:\R\to\C$ be a bounded function.	Then for any bounded arithmetic function $a:\N\to\C$, we have that
	\begin{equation}\label{eqn_keythm_pmax}
		\BEu{n\in[N]}F(p_{\max}(n))a(\Omega(n)+1)=\BEu{n\in[N]}F(p_{\max}(n))a(\Omega(n))+o_{N\to \infty}(1).
	\end{equation}
\end{theorem}

\begin{proof} Without loss of generality, we may assume that $|F|\le1$ and $|a|\le1$. Put 
	$$S_1:=\BEu{n\in[N]}F(p_{\max}(n))a(\Omega(n)+1) \quad\text{ and }\quad S_2:=\BEu{n\in[N]}F(p_{\max}(n))a(\Omega(n)).$$
	Let $\ve\in(0,1)$ and $\rho\in (1,1+\ve)$.  Let $B_1$ and $B_2$ be two finite non-empty sets satisfying the properties (1)-(3) in Lemma~\ref{lem_BR_keylemma1}. And we put
	\begin{align*}
		S_{B_1}&:=\BEul{p\in B_1}\, \BEu{n\in [\frac{N}{p}]}F(p_{\max}(pn))a(\Omega(pn)+1), \\
		\quad S_{B_2}&:=\BEul{q\in B_2}\, \BEu{n\in [\frac{N}{q}]}F(p_{\max}(qn))a(\Omega(qn)).
	\end{align*}
	Then by Proposition~\ref{prop_BR_inequality} and Lemma~\ref{lem_BR_keylemma1} (3), we get that
	\begin{equation}\label{pmaxpf_eq1}
		\limsup_{N\to\infty}|S_1-S_{B_1}|\le \ve^{1/2} \quad\text{and}\quad \limsup_{N\to\infty}|S_2-S_{B_2}|\le \ve^{1/2}.
	\end{equation}
	By Lemma~\ref{lem_BR_keylemma1} (1),  we have $\Omega(pn)=\Omega(n)+1$ and $\Omega(qn)=\Omega(n)+2$ for $p\in B_1, q\in B_2$. It follows that
	\begin{align*}
		S_{B_1}&=\BEul{p\in B_1}\, \BEu{n\in [\frac{N}{p}]}F(p_{\max}(pn))a(\Omega(n)+2), \\
		\quad S_{B_2}&=\BEul{q\in B_2}\, \BEu{n\in [\frac{N}{q}]}F(p_{\max}(qn))a(\Omega(n)+2).
	\end{align*}
	
	Since $B_1$ and $B_2$ are finite, by Lemma~\ref{lem_shift_invariant_pmax} there exists a constant $c>0$ depending only on $B_1\cup B_2$ such that
	\begin{equation}
		\BEu{n\in [\frac{N}{m}]}F(p_{\max}(mn))a(\Omega(n)+2)=\BEu{n\in[\frac{N}{m}]}F(p_{\max}(n))a(\Omega(n)+2)+O(N^{-c})
	\end{equation}
	for any $m\in B_1\cup B_2$. It follows that
	\begin{align}
		S_{B_1}&=\BEul{p\in B_1}\, \BEu{n\in [\frac{N}{p}]}F(p_{\max}(n))a(\Omega(n)+2)+O(N^{-c}), \\
		\quad S_{B_2}&=\BEul{q\in B_2}\, \BEu{n\in [\frac{N}{q}]}F(p_{\max}(n))a(\Omega(n)+2)+O(N^{-c}).
	\end{align}
	Then by Lemma~\ref{lem_BR_keylemma2} and Lemma~\ref{lem_BR_keylemma1} (2), we get that
	\begin{equation}\label{pmaxpf_eq2}
		\limsup_{N\to\infty} |S_{B_1}-S_{B_2}|\le3\ve.
	\end{equation}
	
	Combining \eqref{pmaxpf_eq1} and \eqref{pmaxpf_eq2}, we obtain
	\begin{equation}
		\limsup_{N\to\infty} |S_1-S_2|\le 2\ve^{1/2}+3\ve.
	\end{equation} 
	Since $\ve$ is arbitrary small,  we finally get that $\limsup_{N\to\infty} |S_1-S_2|=0$. This completes the proof.
\end{proof}

\begin{remark}
	For the special case $F=1$,	Eq. \eqref{eqn_keythm_pmax} is Richter's main theorem in \cite{Richter2021} where he gives a  new elementary proof of the Prime Number Theorem.
\end{remark}

\begin{proof}[Proof of Theorem~\ref{thm_Wang2021}]
	For any $x\in X$ and any bounded function $F:\N\to\C$ with $$\lim_{N\to\infty}\frac1N\sum_{n=1}^NF(p_{\max}(n))=\delta,$$ 
	we define the measure $\mu_N$ on $X$ by
	\[
	\mu_N:=\frac1N\sum_{n=1}^NF(p_{\max}(n))\delta_{T^{\Omega(n)}x}
	\]
	for $N \in \N$, where $\delta_y$ denotes the point mass at $y$ for any $y\in X$. Define $\mu'=\delta\cdot \mu$. Then Eq. \eqref{eqn_Wang2021} is equivalent to the convergence of the sequence  $\{\mu_N \}_{n \in \N}$ to $\mu'$ in the weak-$\ast$ topology. Since $\mu$ is uniquely ergodic, to prove $\mu_N\to\mu'$ it suffices to show that each limit point of $\{\mu_N\}_{n \in \N}$ is $T$-invariant. This is done by the following $T$-invariance
	\begin{equation}\label{eqn_T-invariance}
		\lim_{N \to \infty} \Big| \int_X f \circ T\, d\mu_N - \int_X f  \, d\mu_N  \Big| = 0
	\end{equation}
	for all $f\in C(X)$. On the other hand, if we take $a(n)=f(T^{n}x)$ in Theorem~\ref{thm_keythm_pmax}, then we get the following equivalent form of Eq.~\eqref{eqn_T-invariance}:
	\begin{equation}\label{key}
		\lim_{N \to \infty} \Big| \frac1N\sum_{n=1}^NF(p_{\max}(n))f(T^{\Omega(n)+1}x)- \lim_{N\to\infty}\frac1N\sum_{n=1}^NF(p_{\max}(n))f(T^{\Omega(n)}x) \Big| = 0.
	\end{equation}
	This completes the proof of Theorem~\ref{thm_Wang2021}.
\end{proof}

\section{Bergelson and Richter's techniques for additive arithmetic semigroups}
\label{sec_BR_techniques_ff}

In this section, we  show the analogue of Bergelson and Richter's techniques for additive arithmetic groups. Let $\cG$ be an additive arithmetic semigroup Axiom $\mathcal{A}^\#$ type satisfying \eqref{eqn_axiom} and \eqref{eqn_classicalPNT}:
\begin{align}
	G(n)&=A q^n+O(q^{\eta n}),	\label{eqn_axiom_ff}\\
	\bar{\Lambda}(n)&=q^n+O(q^{\theta n})\label{eqn_classicalPNT_ff}
\end{align}
for some constants $A>0, q>1,0\le\eta<\theta<1$. Let $a: \cG\to \C$ be a function on $\cG$. For any  finite and non-empty subset $B\subset \cG$, we define the \textit{Ces\`aro average} of $a$ over $B$ and the \textit{logarithmic average} of $a$ over $B$ respectively by
$$\BEu{g\in B}a(g):=\frac1{|B|}\sum_{g\in B}a(g) \qquad\text{and }\qquad \BEul{g\in B}a(g):=\frac{\sum_{g\in B}{a(g)}/{|g|}}{\sum_{g\in B}{1}/{|g|}}.$$

First, we show the analogue of Proposition~\ref{prop_BR_inequality}.

\begin{proposition}\label{prop_BR_inequality_ff}
	Let	$B\subset \cG$ be a finite and non-empty subset of $\cG$. Then for any bounded function $a: \cG\to \C$ with $|a|\le1$, we have
	\begin{equation}\label{eqn_BR_inequality_ff}
		\limsup_{n\to\infty}\abs{\BEu{g\in\cG_n}a(g)-\BEul{h\in B}\BEu{ g\in\cG_{n-\partial(h)}}a(gh)}\le \Big(\BEul{g\in B}\BEul{h\in B}\Phi(g,h)\Big)^{1/2}
	\end{equation}
	holds for any $n$, where $\Phi(g,h)=|\gcd(g,h)|-1$. 
\end{proposition}

\begin{proof} First, by Eq. \eqref{eqn_axiom_ff}, we observe that  the estimate $|\cG_n|\sim A'q^n \,\,(n\to\infty)$ holds with $A'=Aq/(q-1)$.   Then for any $h\in B$, by $|h|=q^{\pl(h)}$ we have that
	$$\BEu{g\in\cG_{n-\partial (h)}}a(gh)\sim \frac{|h|}{A'q^{n}} \sum_{g\in\cG_{n-\partial (h)}}a(gh)\sim  \BEu{g\in\cG_n} a(g)|h|1_{h|g} \,\,(n\to\infty).$$
	
	Since $B$ is finite, it follows that
	$$\limsup_{n\to\infty}\abs{\BEu{g\in\cG_n}a(g)-\BEul{h\in B}\BEu{ g\in\cG_{n-\partial(h)}}a(gh)}=\limsup_{n\to\infty}\abs{\BEu{g\in\cG_n}\BEul{h\in B} a(g)(1-|h|1_{h|g})}.$$
	Then by the triangle inequality and the Cauchy-Schwarz inequality,
	\begin{align}
		&\limsup_{n\to\infty}\abs{\BEu{g\in\cG_n}\BEul{h\in B} a(g)(1-|h|1_{h|g})}\nonumber\\
		\le&\limsup_{n\to\infty}\BEu{g\in\cG_n}\abs{\BEul{h\in B} (1-|h|1_{h|g})}\nonumber\\
		\le&\limsup_{n\to\infty}\Big(\BEu{g\in\cG_n}\Big|\BEul{h\in B} (1-|h|1_{h|g})\Big|^2\Big)^{1/2}.\label{keyprop_pf1}
	\end{align}
	
	Now, we expand out the square on the right-hand side of \eqref{keyprop_pf1} and write
	$$
	\BEu{g\in\cG_n}\Big|\BEul{h\in B} (1-|h|1_{h|g})\Big|^2:=1-2\Sigma_1+\Sigma_2,
	$$
	where $\Sigma_1=\BEu{g\in\cG_n}\BEul{h\in B}|h|1_{h|g}$ and $\Sigma_2=\BEu{g\in\cG_n}\BEul{h\in B}\BEul{h'\in B}|h|1_{h|g}|h'|1_{h'|g}$.
	
	For $\Sigma_1$, we have $\Sigma_1=\BEul{h\in B}\BEu{g\in\cG_n}|h|1_{h|g}$ and $$\BEu{g\in\cG_n}|h|1_{h|g}=\frac{|h|}{|\cG_n|}\sum_{g\in\cG_n}1_{h|g}=\frac{|h||\cG_{n-\pl(h)}|}{|\cG_n|}\sim 1 \,\,(n\to\infty).$$
	It follows that $\Sigma_1\sim1 \,\,(n\to\infty)$. For $\Sigma_2$, we have $$\BEu{g\in\cG_n}|h|1_{h|g}|h'|1_{h'|g}=\frac{|h||h'||\cG_{n-\pl(\lcm(h,h'))}|}{|\cG_n|}\sim\frac{|h||h'|}{|\lcm(h,h')|}=|\gcd(h,h')| \,\,(n\to\infty).$$ 
	And then $\Sigma_2\sim \BEul{h\in B}\BEul{h'\in B}|\gcd(h,h')| \,\,(n\to\infty)$. Therefore,
	\begin{equation}\label{keyprop_pf2}
		\lim_{n\to\infty}\BEu{g\in\cG_n}\Big|\BEul{h\in B} (1-|h|1_{h|g})\Big|^2=\BEul{g\in B}\BEul{h\in B}\Phi(g,h).
	\end{equation}
	Then \eqref{eqn_BR_inequality_ff} follows immediately by combining \eqref{keyprop_pf1} and \eqref{keyprop_pf2}.
\end{proof}

Set $E(B):=\BEul{g\in B}\BEul{h\in B}\Phi(g,h)$ for the error term in \eqref{eqn_BR_inequality_ff}. If two random chosen elements in $B$ have a ``high chance" of being coprime to each other, then we expect $E(B)$ to be small. For $g\in\cG$, we call $g$ a \textit{$k$-almost prime} if $\Omega(g)=k$. Let $\cP_k=\set{g\in\cG:\Omega(g)=k}$ be the set of $k$-almost primes in $\cG$, $k\ge1$. Then the segments of $\cP_k$ are our desired sets. Similar to Lemma~\ref{lem_errorterm}, to estimate the size of $E(B)$, we have the following criterion.

\begin{lemma}\label{lem_errorterm_ff}
	Fix $0<\ve<1$. 
	\begin{enumerate}[(1)]
		\item If $B\subset \cP$ and $\sum_{g\in B}1/|g|\ge1/\ve$, then $E(B)\le\ve$.
		
		\item If $B\subset \cP_2$ and $B=P_1\cdot P_2$ for $P_i\subset\cP$ with $\sum_{g\in P_i}1/|g|\ge3/\ve$, $i=1,2$, then $E(B)\le\ve$. 
	\end{enumerate}	
\end{lemma}

\begin{proof}
	See \cite[Lemma 2.5-2.6]{BergelsonRichter2020}.
\end{proof}

Now, we prove the analogue of Lemmas~\ref{lem_BR_keylemma1} and \ref{lem_BR_keylemma2}. Like Bergelson and Richter use the $\rho$-adic intervals $[\rho^j,\rho^{j+1})\ (j\ge0)$ to break the line $[1,\infty)$, we use the set $\cG(j)$ of elements of degree $j$ for $j\ge0$ to break the semigroup $\cG$. In each stage $j$, we have sufficiently many primes for sufficiently large $j$.

\begin{lemma} \label{lem_BR_keylemma1_ff}
	For all $0<\ve<1$, there exist two finite and non-empty sets $B_1,B_2\subset\cG$ with the following properties:
	\begin{enumerate}[(1)]
		\item $B_1\subset\cP_1$ and $B_2\subset\cP_2$;
		
		\item $|B_1\cap\cG(n)|=|B_2\cap\cG(n)|$ for all $n\ge0$;
		
		\item $E(B_1)\le\ve$ and $E(B_2)\le\ve$.
	\end{enumerate}	
\end{lemma}

\begin{proof}
	Take $j_0\ge1$ such that $q^{j_0}\ge2$, then ${q^n}/{n}$ is increasing for $n\ge j_0$. By Eq. \eqref{eqn_classicalPNT_ff}, which is the classical PNT, there exists some constant $C>0$ such that $\pi_\cG(n)\ge Cq^n/n$ for all $n\ge1$.	
	
	First, take $s$ large enough such that $\sum_{j_0\le \ell<s}C/\ell\ge3/\ve$. 
	Let $P_{1,\ell}=\cP(\ell)$ for $j_0\le\ell<s$ and let $P_1=\bigcup_{j_0\le\ell<s}P_{1,\ell}$.
	
	Second, take $t$ large enough such that $\sum_{1\le j\le t}\frac{C}{2|P_1|sj}\ge3/\ve$.  For each $1\le j\le t$, choose a subset $P_{2,j}\subset\cP(sj)$  of size 
	$$\frac{Cq^{sj}}{2|P_1|sj}\le |P_{2,j}|\le\frac{Cq^{sj}}{|P_1|sj}.$$
	
	Then $P_{1,\ell}\cdot P_{2,j}\subset\cG(sj+\ell)$ of size $|P_{1,\ell}\cdot P_{2,j}|$. Since $|P_{2,j}|\le\frac{Cq^{sj}}{|P_1|sj}$, we get that $$|P_{1,\ell}\cdot P_{2,j}|\le \frac{Cq^{sj}}{sj} \le \frac{Cq^{sj+\ell}}{sj+\ell}\le P(sj+\ell)$$ for $j_0\le \ell <s$ and $1\le j \le t$. It follows that exists some subset $Q_{\ell,j}\subset\cP(sj+\ell)$ of size $|P_{1,\ell}\cdot P_{2,j}|$. 
	
	Now, we take $$B_1=\bigcup_{\substack{j_0\le\ell<s\\1\le j\le t}}Q_{\ell,j} \text{ and } B_2=\bigcup_{\substack{j_0\le\ell<s\\1\le j\le t}}P_{1,\ell}\cdot P_{2,j}.$$ It is readily to see that properties (1) and (2) hold.  Let $P_1=\bigcup_{j_0\le\ell<s}P_{1,\ell}$ as before and $P_2:=\bigcup_{1\le j\le t}P_{2,j}$, then $B_2=P_1\cdot P_2$. 
	
	Finally, we estimate $E(B_1)$ and $E(B_2)$. By the choice of $s$ and $t$, $$\sum_{p\in P_1}\frac1{|p|}=\sum_{j_0\le\ell<s}\frac{\pi_\cG(\ell)}{q^\ell}\ge\sum_{j_0\le\ell<s}\frac{C}{\ell}\ge\frac3\ve;$$ 
	and $$\sum_{p\in P_2}\frac1{|p|}=\sum_{1\le j\le t}\frac{|P_{2,j}|}{q^{sj}}\ge\sum_{1\le j\le t}\frac{C}{2|P_1|sj}\ge\frac3\ve.$$ 
	By Lemma~\ref{lem_errorterm_ff}(2), we get that $E(B_2)\le \ve$. As regards $B_1$, since $|B_1\cap\cP(n)|=|B_2\cap\cP(n)|$ for all $n\ge0$ by property (2),  we get that
	$$\sum_{g\in B_1}\frac1{|g|}=\sum_{g\in B_2}\frac1{|g|}=\Big(\sum_{p\in P_1}\frac1{|p|}\Big)\Big(\sum_{q\in P_2}\frac1{|q|}\Big)\ge\big(\frac3{\ve}\big)^2\ge\frac1\ve.$$ By Lemma~\ref{lem_errorterm_ff}(1), we have $E(B_1)\le \ve$. 	Thus, property (3) also holds.
\end{proof}

\begin{lemma}\label{lem_BR_keylemma2_ff}
	Let $B_1$ and $B_2$ be two finite and nonempty subsets of $\cG$ with the property that $|B_1\cap\cG(j)|=|B_2\cap\cG(j)|$ for all $j\ge0$. Then for any function $a:\cG\to\C$ we have 
	\begin{equation}\label{keylemma2_eq}
		\BEul{h\in B_1}\BEu{ g\in\cG_{n-\partial(h)}}a(g)=\BEul{h\in B_2}\BEu{ g\in\cG_{n-\partial(h)}}a(g).
	\end{equation}
\end{lemma}
\begin{proof} We split up the logarithmic averages over $B_1$ and $B_2$ according to the degrees of their elements, and then interchange the order of summations
	\begin{align*}
		&\BEul{h\in B_1}\BEu{ g\in\cG_{n-\partial(h)}}a(g)-\BEul{h\in B_2}\BEu{ g\in\cG_{n-\partial(h)}}a(g)\\
		=&\sum_{j=0}^\infty \Big(\BEul{h\in B_1}1_{\pl(h)=j}\BEu{ g\in\cG_{n-j}}a(g)-\BEul{h\in B_2}1_{\pl(h)=j}\BEu{ g\in\cG_{n-j}}a(g)\Big)\\
		=&\sum_{j=0}^\infty \BEu{ g\in\cG_{n-j}}a(g) \Big(\BEul{h\in B_1}1_{\pl(h)=j}-\BEul{h\in B_2}1_{\pl(h)=j}\Big).
	\end{align*}	
	Since $|B_1\cap\cG(j)|=|B_2\cap\cG(j)|$, we get that $\BEul{h\in B_1}1_{\pl(h)=j}=\BEul{h\in B_2}1_{\pl(h)=j}$ for all $j\ge0$. Thus, \eqref{keylemma2_eq} holds.
\end{proof}

\section{Proof of Theorems~\ref{thm_Loyd2021_ff} and \ref{thm_Wang2021_ff}}
\label{sec_proof_mainthm_ff}

In this section, we prove Theorems~\ref{thm_Loyd2021_ff} and \ref{thm_Wang2021_ff}. The proofs of these two theorems are similar and parallel. Taking $F=1$ in the following context, one gets the proof of Eq.~\eqref{eqn_BR2020_ff}. Hence it suffices to prove \eqref{eqn_Loyd2021_ff} and \eqref{eqn_Wang2021_pmax_ff}. Due to the  unique ergodicity of $(X,\mu,T)$,  similar to the argument in the proof of Theorem~\ref{thm_Wang2021}, to prove \eqref{eqn_Loyd2021_ff} and \eqref{eqn_Wang2021_pmax_ff}, it suffices to show the following two equations of $T$-invariance
\begin{align}
	&\lim_{n\to\infty}\abs{\BEu{g\in\cG_n}F(\varphi(g))f(T^{\Omega(g)+1}x)-\BEu{g\in\cG_n}F(\varphi(g))f(T^{\Omega(g)}x)}=0;\label{eqn_inv_Loyd}\\
	&\lim_{n\to\infty}\abs{\BEu{g\in\cG_n^+}F(p_{\max}(g))f(T^{\Omega(g)+1}x)-\BEu{g\in\cG_n^+}F(p_{\max}(g))f(T^{\Omega(g)}x)}=0\label{eqn_inv_Wang}
\end{align}
for every $f\in C(X)$ and $x\in X$, where we recall that $\varphi(g):=\frac{\Omega(g)-\log n}{\sqrt{\log n}}$. Here we clarify that $F\in C_c(\R)$ in \eqref{eqn_inv_Loyd} but we assume it to be a bounded function on $\cG$ in \eqref{eqn_inv_Wang}. To prove \eqref{eqn_inv_Loyd} and \eqref{eqn_inv_Wang}, it suffices to prove the analogues of Theorem~\ref{thm_keythm_pmax} for $F(\varphi(g))$ and $F(p_{\max}(g))$ respectively, which will be done in Theorems~\ref{thm_Loyd2021_keythm_ff} and \ref{thm_Wang2021_keythm_ff} below. First, using the ideas in \cite[Proposition 4.5]{Loyd2021}, we  prove a lemma that will be used in the proof of Theorem~\ref{thm_Loyd2021_keythm_ff}.
\begin{lemma}\label{lem_shift_invariant_phi}
	Let $F\in C_c(\R)$ be a compactly supported continuous function.	Let $a:\cG\to\C$ be a bounded function on $\cG$.
	Then for any fixed $h\in \cG$, we have 
	\begin{equation}\label{eqn_shift_invariant_phi}
		\BEu{ g\in\cG_n}F(\varphi(hg))a(g)=\BEu{ g\in\cG_n}F(\varphi(g))a(g)+o_{n\to\infty}(1).
	\end{equation}
\end{lemma}
\begin{proof}
	We may assume that $|a|\le1$. By definition, we have
	$\varphi(hg)-\varphi(g)=\Omega(h)/\sqrt{\log n}$ for all $g\in \cG$. Since every compactly supported continuous function is uniformly continuous on $\R$ and $F\in C_c(\R)$, for any $\ve>0$, there exists some $N=N(\ve)>0$ such that $|F(\varphi(hg))-F(\varphi(g))|<\ve$ holds uniformly for all $n\ge N$ and all $g\in \cG$. In particular, for all $n\ge N$ and all $g\in \cG_n$, we have $|F(\varphi(hg))-F(\varphi(g))|<\ve$. Then for $n\ge N$, we have
	\[
	\left|\BEu{ g\in\cG_n}F(\varphi(hg))a(g)-\BEu{ g\in\cG_n}F(\varphi(g))a(g)\right|\le \BEu{ g\in\cG_n}|F(\varphi(hg))-F(\varphi(g))|\le  \BEu{ g\in\cG_n}\ve=\ve.
	\]
	Hence \eqref{eqn_shift_invariant_phi} follows.
\end{proof}

Now, we prove the following analogue of Theorem~\ref{thm_keythm_pmax} for $F(\varphi(g))$.

\begin{theorem}\label{thm_Loyd2021_keythm_ff}
	Let $F\in C_c(\R)$ be a compactly supported continuous function.	Then for any bounded arithmetic function $a:\N\to\C$, we have that
	\begin{equation}\label{eqn_Loyd2021_keythm_ff}
		\BEu{g\in\cG_n}F(\varphi(g))a(\Omega(g)+1)=\BEu{g\in\cG_n}F(\varphi(g))a(\Omega(g))+o_{n\to \infty}(1).
	\end{equation}
\end{theorem}

\begin{proof} Without loss of generality, we may assume that $|F|\le1$ and $|a|\le1$. For any $0<\ve<1$, we take $B_1$ and $B_2$ as in Lemma~\ref{lem_BR_keylemma1_ff}. Then by Proposition~\ref{prop_BR_inequality_ff}, 
	\begin{align}
		&\limsup_{n\to\infty}\abs{\BEu{g\in\cG_n}F(\varphi(g))a(\Omega(g)+1)-\BEul{p\in B_1}\BEu{ g\in\cG_{n-\partial(p)}}F(\varphi(pg))a(\Omega(pg)+1)}\le\ve^{1/2}; \label{eqn1_pf_Loyd2021_keythm_ff}\\
		&\limsup_{n\to\infty}\abs{\BEu{g\in\cG_n}F(\varphi(g))a(\Omega(g))-\BEul{q\in B_2}\BEu{ g\in\cG_{n-\partial(q)}}F(\varphi(qg))a(\Omega(qg))}\le\ve^{1/2}. \label{eqn2_pf_Loyd2021_keythm_ff}
	\end{align}
	
	By Lemma~\ref{lem_BR_keylemma1_ff} (1), we have $a(\Omega(pg)+1)=a(\Omega(g)+2)$ and $a(\Omega(qg))=a(\Omega(g)+2)$ for any $p\in B_1$ and $q\in B_2$.  Then for any $h\in B_1\cup B_2$, by Lemma~\ref{lem_shift_invariant_phi}, we get that
	\begin{equation}\label{eqn3_pf_Loyd2021_keythm_ff}
		\BEu{ g\in\cG_{n-\partial(h)}}F(\varphi(hg))a(\Omega(g)+2)=\BEu{ g\in\cG_{n-\partial(h)}}F(\varphi(g))a(\Omega(g)+2)+o_{n\to\infty}(1).
	\end{equation}
	
	Due to the finiteness of $B_1$ and $B_2$, from \eqref{eqn1_pf_Loyd2021_keythm_ff}-\eqref{eqn3_pf_Loyd2021_keythm_ff} we obtain that
	\begin{align}
		&\limsup_{n\to\infty}\abs{\BEu{g\in\cG_n}F(\varphi(g))a(\Omega(g)+1)-\BEul{h\in B_1}\BEu{ g\in\cG_{n-\partial(h)}}F(\varphi(g))a(\Omega(g)+2)}\le\ve^{1/2};\label{pf_eq1}\\
		&\limsup_{n\to\infty}\abs{\BEu{g\in\cG_n}F(\varphi(g))a(\Omega(g))-\BEul{h\in B_2}\BEu{ g\in\cG_{n-\partial(h)}}F(\varphi(g))a(\Omega(g)+2)}\le\ve^{1/2}.\label{pf_eq2}
	\end{align}
	
	By Lemma~\ref{lem_BR_keylemma2_ff}, we have 
	$$\BEul{h\in B_1}\BEu{ g\in\cG_{n-\partial(h)}}F(\varphi(g))a(\Omega(g)+2)=\BEul{h\in B_2}\BEu{ g\in\cG_{n-\partial(h)}}F(\varphi(g))a(\Omega(g)+2).$$ 
	Therefore,  by \eqref{pf_eq1}, \eqref{pf_eq2} and the triangle inequality,
	$$\limsup_{n\to\infty}\abs{\BEu{g\in\cG_n}F(\varphi(g))a(\Omega(g)+1)-\BEu{g\in\cG_n}F(\varphi(g))a(\Omega(g))}\le2\ve^{1/2}.$$
	Since $\ve$ is arbitrarily small, we obtain \eqref{eqn_Loyd2021_keythm_ff} as desired.	
\end{proof}

Now, to show the analogue of Theorem~\ref{thm_keythm_pmax} for $F(p_{\max}(g))$ over $g\in \cG^+$, we extend the domain of $p_{\max}$ to the whole $\cG$ by defining $p_{\max}(g):=1$ for all $g\in \cG\setminus\cG^+$. Under this definition, it has the property that $p_{\max}(gh)=p_{\max}(g)$ for any $g,h\in \cG$ with $\pl_{\max}(g)>\pl_{\max}(h)$. Then we have the following analogue of Lemma~\ref{lem_shift_invariant_pmax} for $F(p_{\max}(g))$ over $g\in\cG$.

\begin{lemma}\label{lem_shift_invariant_plmax}
	Let $F, a:\cG\to\C$ be two bounded arithmetic functions  on $\cG$.
	Then for any $h\in \cG$, we have 
	\begin{equation}\label{eqn_shift_invariant_plmax}
		\BEu{ g\in\cG_n}F(p_{\max}(hg))a(g)=\BEu{ g\in\cG_n}F(p_{\max}(g))a(g)+O(e^{-cn})
	\end{equation}
	for some constant $c>0$.
\end{lemma}
\begin{proof}
	We may assume that $|F|\le1$ and $|a|\le1$. Let $k:=\max\set{2/\log q, \pl_{\max}(h)}$. Then we break the left average up into two parts as follows:
	\begin{align}
		\BEu{ g\in\cG_n}F(p_{\max}(hg))a(g)&=\BEu{ g\in\cG_n}1_{\pl_{\max}(g)\le k}F(p_{\max}(hg))a(g)+\BEu{ g\in\cG_n}1_{\pl_{\max}(g)>k}F(p_{\max}(hg))a(g)\nonumber\\
		&=\BEu{ g\in\cG_n}1_{\pl_{\max}(g)\le k}F(p_{\max}(hg))a(g)+\BEu{ g\in\cG_n}1_{\pl_{\max}(g)>k}F(p_{\max}(g))a(g).
	\end{align}
	By the triangle inequality and Lemma~\ref{lem_smooth_numbers}, we have
	\begin{equation}
		\left|\BEu{ g\in\cG_n}1_{\pl_{\max}(g)\le k}F(p_{\max}(hg))a(g)\right|\le \BEu{ g\in\cG_n}1_{\pl_{\max}(g)\le k}\ll \frac1{q^n}\Psi_{\cG}(n,k)\ll e^{-\frac{n}{2k}}.
	\end{equation}
	
	It follows that 
	\begin{equation}\label{eqn1_pf_plmax}
		\BEu{ g\in\cG_n}F(p_{\max}(hg))a(g)=\BEu{ g\in\cG_n}1_{\pl_{\max}(g)>k}F(p_{\max}(g))a(g)+O(e^{-cn})
	\end{equation}
	for some constant $c>0$, say $c=1/(2k)$. Similarly,
	\begin{equation}\label{eqn2_pf_plmax}
		\BEu{ g\in\cG_n}F(p_{\max}(g))a(g)=\BEu{ g\in\cG_n}1_{\pl_{\max}(g)>k}F(p_{\max}(g))a(g)+O(e^{-cn}).
	\end{equation}
	Hence \eqref{eqn_shift_invariant_plmax} follows immediately by \eqref{eqn1_pf_plmax} and \eqref{eqn2_pf_plmax}.
\end{proof}

By Lemma~\ref{lem_shift_invariant_plmax}, we see that Eq. \eqref{eqn_shift_invariant_phi} holds for $F\circ\varphi$ replaced by $F\circ p_{\max}$. Therefore, following the proof of Theorem~\ref{thm_Loyd2021_keythm_ff}, we still have
\begin{equation}\label{eqn_Wang2021_keythm_G}
	\BEu{g\in\cG_n}F(p_{\max}(g))a(\Omega(g)+1)=\BEu{g\in \cG_n}F(p_{\max}(g))a(\Omega(g))+o_{n\to \infty}(1).
\end{equation}
By Lemma~\ref{lem_size_of_cG}, the following identity 
\begin{equation}\label{eqn_cG_cGplus}
	\BEu{g\in\cG_n}w(g)=\BEu{g\in\cG_n^+}w(g)+o_{n\to \infty}(1)
\end{equation}
holds for any bounded function $w:\cG\to\C$.
Thus, from \eqref{eqn_Wang2021_keythm_G} and \eqref{eqn_cG_cGplus} we get the following analogue of Theorem~\ref{thm_keythm_pmax} for $\cG^+$, which implies Eq.~\eqref{eqn_inv_Wang} and hence Theorem~\ref{thm_Wang2021_ff}.

\begin{theorem}\label{thm_Wang2021_keythm_ff}
	Let $F:\cG\to\C$ be a bounded  function.
	Then for any bounded arithmetic function $a:\N\to\C$, we have that
	\begin{equation}\label{eqn_Wang2021_keythm_ff}
		\BEu{g\in\cG_n^+}F(p_{\max}(g))a(\Omega(g)+1)=\BEu{g\in \cG_n^+}F(p_{\max}(g))a(\Omega(g))+o_{n\to \infty}(1).
	\end{equation}
\end{theorem}

\begin{remark}
	Let $(X,\mu,T)$ be uniquely ergodic.	Similar to the proof of Theorem~\ref{thm_Wang2021_ff} above, we have that  if $F:\N\to\C$ is a bounded arithmetic function such that 
	$$\sum_{g\in \cG_n}F(\pl_{\max}(g))\sim \delta \cdot |\cG_n|,$$
	then we have
	\begin{equation}
		\lim_{n\to\infty}\frac1{|\cG_n|}\sum_{g\in \cG_n}F(\pl_{\max}(g))f(T^{\Omega(g)}x)=\delta\int_X f\,d\mu
	\end{equation}
	for every $x\in X$ and $f\in C(X)$.
\end{remark}

\section*{Acknowledgment}
The author would like to thank Fei Wei and Shaoyun Yi for their comments and encouragement. The author is also grateful to the anonymous referees for
their helpful corrections and suggestions.


\begin{thebibliography}{10}
	\expandafter\ifx\csname url\endcsname\relax
	\def\url#1{\texttt{#1}}\fi
	\expandafter\ifx\csname urlprefix\endcsname\relax\def\urlprefix{URL }\fi
	\expandafter\ifx\csname href\endcsname\relax
	\def\href#1#2{#2} \def\path#1{#1}\fi
	
	
	\bibitem{Alladi1977}
	K.~Alladi, Duality between prime factors and an application to the prime number
	theorem for arithmetic progressions, J. Number Theory 9~(4) (1977) 436--451.
	\newblock \href {https://doi.org/10.1016/0022-314X(77)90005-1}
	{\path{doi:10.1016/0022-314X(77)90005-1}}.
	
	\bibitem{BergelsonRichter2020}
	V.~Bergelson, F.~K. Richter, Dynamical generalizations of the prime number
	theorem and disjointness of additive and multiplicative semigroup actions,
	Duke Math. J. (2022), accepted. Preprint available at \url{https://arxiv.org/abs/2002.03498}.
	
	\bibitem{Daboussi1975}
	H.~Daboussi, Fonctions multiplicatives presque p\'{e}riodiques {B}, in:
	Journ\'{e}es {A}rithm\'{e}tiques de {B}ordeaux ({C}onf., {U}niv. {B}ordeaux,
	{B}ordeaux, 1974), 1975, pp. 321--324. Ast\'{e}risque, No. 24--25, d'apr\`es
	un travail commun avec Hubert Delange.
	
	\bibitem{Delange1958}
	H.~Delange, On some arithmetical functions, Illinois J. Math. 2 (1958) 81--87. \url{http://projecteuclid.org/euclid.ijm/1255380835}.
	
	\bibitem{Deligne1974}
	P.~Deligne, La conjecture de {W}eil. {I}, Inst. Hautes \'{E}tudes Sci. Publ.
	Math.~(43) (1974) 273--307.
	
	\bibitem{DuanMaYi2021}
	L.~Duan, N.~Ma, S.~Yi, Generalizations of alladi's formula for arithmetical
	semigroups, Ramanujan J. (2022).
	\newblock \href {https://doi.org/10.1007/s11139-021-00531-7}
	{\path{doi:10.1007/s11139-021-00531-7}}.
	
	\bibitem{DuanWangYi2021}
	L.~Duan, B.~Wang, S.~Yi, Analogues of {A}lladi's formula over global function
	fields, Finite Fields Appl. 74 (2021) 101874.
	\newblock \href {https://doi.org/10.1016/j.ffa.2021.101874}
	{\path{doi:10.1016/j.ffa.2021.101874}}.
	
	\bibitem{Dwork1960}
	B.~Dwork, On the rationality of the zeta function of an algebraic variety,
	Amer. J. Math. 82 (1960) 631--648.
	\newblock \href {https://doi.org/10.2307/2372974} {\path{doi:10.2307/2372974}}.
	
	\bibitem{Elliott1971}
	P.~D. T.~A. Elliott, On	the limiting distribution of additive functions (mod {$1$}), Pacific J. Math. 38 (1971) 49--59. \url{http://projecteuclid.org/euclid.pjm/1102970257}.
	
	\bibitem{Erdos1946}
	P.~Erd\H{o}s, On the distribution function of additive functions, Ann. of Math.
	(2) 47 (1946) 1--20.
	\newblock \href {https://doi.org/10.2307/1969031} {\path{doi:10.2307/1969031}}.
	
	\bibitem{Grothendieck1995}
	A.~Grothendieck, Formule de {L}efschetz et rationalit\'{e} des fonctions {$L$},
	in: S\'{e}minaire {B}ourbaki, {V}ol. 9, Exp. No. 279, Soc. Math. France,
	Paris, 1995, pp. 41--55.
	
	\bibitem{Harris1995}
	J.~Harris, Algebraic geometry, Vol. 133 of Graduate Texts in Mathematics,
	Springer-Verlag, New York, 1995, a first course, Corrected reprint of the
	1992 original.
	
	\bibitem{Indlekofer1991}
	K.-H. Indlekofer, E.~Manstavi\v{c}ius, R.~Warlimont, On a certain class of
	infinite products with an application to arithmetical semigroups, Arch. Math.
	(Basel) 56~(5) (1991) 446--453.
	\newblock \href {https://doi.org/10.1007/BF01200088}
	{\path{doi:10.1007/BF01200088}}.
	
	\bibitem{IvicPomerance1984}
	A.~Ivi\'{c}, C.~Pomerance, Estimates for certain sums involving the largest
	prime factor of an integer, in: Topics in classical number theory, {V}ol.
	{I}, {II} ({B}udapest, 1981), Vol.~34 of Colloq. Math. Soc. J\'{a}nos Bolyai,
	North-Holland, Amsterdam, 1984, pp. 769--789.
	
	\bibitem{Katai1986}
	I.~K\'{a}tai, A remark on a theorem of {H}. {D}aboussi, Acta Math. Hungar.
	47~(1-2) (1986) 223--225.
	\newblock \href {https://doi.org/10.1007/BF01949145}
	{\path{doi:10.1007/BF01949145}}.
	
	\bibitem{Knopfmacher1975}
	J.~Knopfmacher, Abstract analytic number theory, North-Holland Mathematical
	Library, Vol. 12, North-Holland Publishing Co., Amsterdam-Oxford; American
	Elsevier Publishing Co., Inc., New York, 1975.
	
	\bibitem{KnopfmacherZhang2001}
	J.~Knopfmacher, W.-B. Zhang, Number theory arising from finite fields, Vol. 241
	of Monographs and Textbooks in Pure and Applied Mathematics, Marcel Dekker,
	Inc., New York, 2001.
	\newblock \href {https://doi.org/10.1201/9780203908150}
	{\path{doi:10.1201/9780203908150}}.
	
	\bibitem{KuralMcDonaldSah2020}
	M.~Kural, V.~McDonald, A.~Sah, M\"{o}bius formulas for densities of sets of
	prime ideals, Arch. Math. 115 (2020) 53--66.
	\newblock \href {https://doi.org/10.1007/s00013-020-01458-z}
	{\path{doi:10.1007/s00013-020-01458-z}}.
	
	\bibitem{Landau1953}
	E.~Landau, Handbuch der {L}ehre von der {V}erteilung der {P}rimzahlen. 2
	{B}\"{a}nde, Chelsea Publishing Co., New York, 1953, 2nd ed, With an appendix
	by Paul T. Bateman.
	
	\bibitem{Liu2004}
	Y.-R. Liu, A generalization of the {T}ur\'{a}n theorem and its applications,
	Canad. Math. Bull. 47~(4) (2004) 573--588.
	\newblock \href {https://doi.org/10.4153/CMB-2004-056-7}
	{\path{doi:10.4153/CMB-2004-056-7}}.
	
	\bibitem{Loyd2021}
	K.~Loyd, A dynamical approach to the asymptotic behavior of the sequence
	$\Omega(n)$ (2021).
	\newblock \href {http://arxiv.org/abs/2109.08757} {\path{arXiv:2109.08757}}.
	
	\bibitem{Mangoldt1897}
	H.C.F.~von Mangoldt, Beweis
	der Gleichung $\sum_{k=1}^\infty\frac{\mu(k)}{k}=0$, Vol.~2,
	Sitzungsberichte der K\"{o}niglich Preussischen Akademie der Wissenschaften
	zu Berlin, 1897. \url{https://biodiversitylibrary.org/page/29982413}.
	
	\bibitem{Pillai1940}
	S.~S. Pillai, Generalisation of a theorem of {M}angoldt, Proc. Indian Acad.
	Sci., Sect. A. 11 (1940) 13--20.
	
	\bibitem{Richter2021}
	F.~K. Richter, A new elementary proof of the {P}rime {N}umber {T}heorem, Bull.
	Lond. Math. Soc. 53~(5) (2021) 1365--1375.
	\newblock \href {https://doi.org/10.1112/blms.12503}
	{\path{doi:10.1112/blms.12503}}.
	
	\bibitem{Rosen2002}
	M.~Rosen, Number theory in function fields, Vol. 210 of Graduate Texts in
	Mathematics, Springer-Verlag, New York, 2002.
	\newblock \href {https://doi.org/10.1007/978-1-4757-6046-0}
	{\path{doi:10.1007/978-1-4757-6046-0}}.
	
	\bibitem{Selberg1939}
	S.~Selberg, Zur {T}heorie der quadratfreien {Z}ahlen, Math. Z. 44~(1) (1939)
	306--318.
	\newblock \href {https://doi.org/10.1007/BF01210655}
	{\path{doi:10.1007/BF01210655}}.
	
	\bibitem{Tenenbaum2015}
	G.~Tenenbaum, Introduction to analytic and probabilistic number theory, 3rd
	Edition, Vol. 163 of Graduate Studies in Mathematics, American Mathematical
	Society, Providence, RI, 2015, translated from the 2008 French edition by
	Patrick D. F. Ion.
	\newblock \href {https://doi.org/10.1090/gsm/163} {\path{doi:10.1090/gsm/163}}.
	
	\bibitem{TenenbaumMendes-France2000}
	G.~Tenenbaum, M.~Mend\`es~France, The prime numbers and their distribution,
	Vol.~6 of Student Mathematical Library, American Mathematical Society,
	Providence, RI, 2000, translated from the 1997 French original by Philip G.
	Spain.
	\newblock \href {https://doi.org/10.1090/stml/006}
	{\path{doi:10.1090/stml/006}}.
	
	\bibitem{Walters1982}
	P.~Walters, An introduction to ergodic theory, Vol.~79 of Graduate Texts in
	Mathematics, Springer-Verlag, New York-Berlin, 1982.
	
	\bibitem{Wang2021jnt}
	B.~Wang, Analogues of {A}lladi's formula, J. Number Theory 221 (2021) 232--246.
	\newblock \href {https://doi.org/10.1016/j.jnt.2020.06.001}
	{\path{doi:10.1016/j.jnt.2020.06.001}}.
	
	\bibitem{Weil1949}
	A.~Weil, Numbers of solutions of equations in finite fields, Bull. Amer. Math.
	Soc. 55 (1949) 497--508.
	\newblock \href {https://doi.org/10.1090/S0002-9904-1949-09219-4}
	{\path{doi:10.1090/S0002-9904-1949-09219-4}}.
	
	\bibitem{Weyl1916}
	H.~Weyl, \"{U}ber die {G}leichverteilung von {Z}ahlen mod. {E}ins, Math. Ann.
	77~(3) (1916) 313--352.
	\newblock \href {https://doi.org/10.1007/BF01475864}
	{\path{doi:10.1007/BF01475864}}.
\end{thebibliography}
\end{document}